\let\oldnl\nl
\newcommand{\nonl}{\renewcommand{\nl}{\let\nl\oldnl}}
\newcommand\cyr
\renewcommand\rmdefault{wncyr}
\renewcommand\sfdefault{wncyss}
\renewcommand\encodingdefault{OT2}
\DeclareTextFontCommand{\textcyr}{\cyr}
\def\Xint#1{\mathchoice
{\XXint\displaystyle\textstyle{#1}}%
{\XXint\textstyle\scriptstyle{#1}}%
{\XXint\scriptstyle\scriptscriptstyle{#1}}%
{\XXint\scriptscriptstyle\scriptscriptstyle{#1}}%
\!\int}
\def\XXint#1#2#3{{\setbox0=\hbox{$#1{#2#3}{\int}$ }
\vcenter{\hbox{$#2#3$ }}\kern-.6\wd0}}
\def\dashint{\Xint-}
\newcommand{\roma}{\mathrm{I}}
\newcommand{\romb}{\mathrm{II}}
\newcommand{\prnt}[1]{\left( #1 \right)}
\newcommand{\norm}[1]{\left\|#1\right\|}
\newcommand{\normL}[2]{\norm{#1}_{L^2\prnt{#2}}}
\newcommand{\Cov}{C_{\mathrm{ov}}}
\newcommand{\Cw}{{\rm C}_{\mathrm{weak}}}
\newcommand{\Ce}{{\rm C}_{\mathrm{est}}}
\newcommand{\Const}[1]{{\rm C}_{\mathrm{#1}}}
\newcommand{\Co}{{\rm C}_{\mathrm{ap}}(k)}
\newcommand{\Cpoin}[1]{{\rm C}_{\mathrm{poin}}(#1)}
\newcommand{\Cpoinn}[2]{{\rm C}^{#2}_{\mathrm{poin}}(#1)}
\newcommand{\locv}[3]{{#1}^{#2}_{\mathrm{#3}}}
\newcommand{\RR}{\mathcal{R}}
\newcommand{\op}{\operatorname}
\newtheorem{theorem}{Theorem}[section]
\newtheorem{assumption}{Assumption}[section]
\newtheorem{remark}{Remark}[section]
\newtheorem{lemma}{Lemma}[section]
\newtheorem{proposition}{Proposition}[section]
\numberwithin{equation}{section}
\title{On Edge Multiscale Space based Hybrid Schwarz Preconditioner for Helmholtz Problems with Large Wavenumbers}
\author{Shubin Fu\thanks{Eastern Institute for Advanced Study, Eastern Institute of Technology, Ningbo, Zhejiang 315200, P. R. China. (\texttt{shubinfu@eias.ac.cn})} \and Shihua Gong\thanks{School of Science and Engineering, The Chinese University of Hong Kong, Shenzhen, Guangdong 518172, P. R. China ({\tt{gongshihua@cuhk.edu.cn}}) SG acknowledges the support from NSFC (No. 12201535), Guangdong Basic and Applied Basic Research Foundation (No. 2023A1515011651), and  Shenzhen Stability Science Program 2022.} \and Guanglian Li\thanks{Corresponding author. Department of Mathematics, The University of Hong Kong, Pokfulam Road, Hong Kong SAR, P.R. China. ({\tt{lotusli@maths.hku.hk}}) GL acknowledges the support from Young Scientists fund (No.: 12101520) by NSFC and Early Career
Scheme (No.: 27301921) of Hong Kong RGC.} \and Yueqi Wang\thanks{Department of Mathematics, The University of Hong Kong, Pokfulam Road, Hong Kong SAR, P.R. China ({\tt{u3007895@connect.hku.hk}}) YW is supported by Hong Kong RGC via the Hong Kong PhD Fellowship Scheme.}}
\begin{document}
\maketitle

\begin{abstract}

In this work, we develop a novel hybrid Schwarz method, termed as edge multiscale space based hybrid Schwarz (EMs-HS), for solving the Helmholtz problem with large wavenumbers. The problem is discretized using $H^1$-conforming nodal finite element methods on meshes of size $h$ decreasing faster than $k^{-1}$ such that the discretization error remains bounded as the wavenumber $k$ increases. EMs-HS consists of a one-level Schwarz preconditioner (RAS-imp) and a coarse solver in a multiplicative way. The RAS-imp preconditioner solves local problems on overlapping subdomains with impedance boundary conditions in parallel, and combines the local solutions using  partition of unity. The coarse space is an edge multiscale space proposed in {[S. Fu, G. Li, R. Craster, and S. Guenneau, \emph{Multiscale Model. Simul.}, 19(4):1684–1709,
2021]}. The key idea is to first establish a local splitting of the solution over each subdomain by a local bubble part and local Helmholtz harmonic extension part, and then to derive a global splitting by means of the partition of unity. This facilitates representing the solution as the sum of a global bubble part and a global Helmholtz harmonic extension part. 

 We prove that the EMs-HS preconditioner leads to a convergent fixed-point iteration uniformly for large wavenumbers, by rigorously analyzing the approximation properties of the coarse space to the global Helmholtz harmonic extension part and to the solution of the adjoint problem. Distinctly, the theoretical convergence analysis are valid in two extreme cases: using minimal overlapping size among subdomains (of order $h$), or using coarse spaces of optimal dimension (of magnitude $k^d$, where $d$ is the spatial dimension). We provide extensive numerical results on the sharpness of the theoretical findings and also demonstrate the method on challenging heterogeneous models.    
 \end{abstract}
 \noindent \textbf{AMS subject classifications:} 65F08, 65F10, 65N55
 
\noindent \textbf{Key words:} Helmholtz equation, large wavenumber, hybrid Schwarz preconditioner, multiscale ansatz space

\section{Introduction}

The efficient solution of the Helmholtz problem with large wavenumbers on modern multiprocessor computers is  of huge importance in many practical applications, e.g., acoustics, geophysics and electromagnetics, and has long been considered one of the most challenging problems in scientific computing. Helmholtz problems are numerically challenging because (a) the solutions are often highly oscillatory and very fine discretizations are needed to resolve them, leading to large system matrices; (b) the system matrices are highly indefinite and non-normal, and standard iterative methods are not reliable; (c) the propagative nature of the problem implies that the presence of a source at any point can have considerable effect far away, thus inhibiting the performance of parallel algorithms which rely on localization techniques. Each of these essential difficulties gets worse as the wavenumber $k$ increases. 

In recent years there have been significant developments in the theory of one-level Schwarz methods for high-frequency Helmholtz problems. Since the Galerkin projection of discrete Helmholtz problems into a local domain might be singular, one-level Schwarz methods usually require stabilization, e.g., shifted-Laplacian technique \cite{erlangga2006novel, erlangga2004class}. One-level methods with symmetrized weighted prolongation and restriction by partition of unity lead to good field of values for preconditioning absorptive Helmholtz problems  \cite{graham2020domain,gong2021domain}. There are also one-level methods using wave-type truncation techniques to construct local problems \cite{kimn2007restricted,engquist2011sweepinga}, e.g. RAS-imp preconditioners \cite{gong2022convergence,gong2023convergence} using absorbing boundary condition  and the RAS-PML preconditioners \cite{galkowski2024convergence} using perfectly matched layers, in which the local solutions are combined using partition of unity. With stable local problems, these one-level methods are power contractive. Given a fixed setting of domain decomposition with generous overlap, they can be $k$-robust even without a coarse space, but the drawback is that the subdomain problems are fairly costly and the algorithms are not scalable.

To obtain a scalable domain decomposition algorithm (i.e., weakly dependent of the number of subdomains), one usually needs a two-level method by introducing a coarse space that can capture the global information and effectively transfer it among subdomains. Two-level methods utilize a standard algorithmic strategy for scalable solutions of large systems of equations and have demonstrated great success in steady-state diffusion, elasticity and fluid problems. The classical Schwarz theory \cite{toselli2006domain} can predict the performance of two-level methods for these self-adjoint problems. One basic tool is the fictitious space lemma that requires constructing a stable decomposition for each function in the discrete solution space. Whereas the coarse function space provides a coarse approximation, the local function spaces resolve the high-frequency remainder, which should be relatively small in the $L^2$-norm compared to its $H^1$-norm. The classical Schwarz theory \cite{cai1992domain,xu1992new} can also  predict the performance of certain two-level domain decomposition algorithms for solving non-symmetric and indefinite problems when the indefinite part of the operator is a small perturbation of the elliptic part. Unfortunately, for the high-frequency Helmholtz equation, the indefinite part is amplified by the square of the wavenumber $k^2$ and the classical theory gives very pessimistic estimates  \cite{graham2017domain}.

Thus, it is natural to ask whether the theory of \cite{graham2020domain,gong2021domain,gong2022convergence,gong2023convergence,galkowski2024convergence} can be improved if a coarse space were added. For example, can the addition of a coarse space speed up the information exchange among subdomains? Or can the addition of a coarse space reduce the requirement of the overlapping size in the domain decompositions? For the conventional coarse spaces (i.e., piecewise polynomials on a coarse grid), the theory of coarse approximation for Helmholtz is inextricably linked to the theory of finite element approximation of the Helmholtz problem itself; see Remark \ref{rm:mesh-resolution} for more discussions. In particular, if the diameter of the coarse mesh is not small enough, there is no guarantee that a coarse projection exists at all, let alone any accuracy. The most well-known coarse spaces for Helmholtz problems with heterogeneous coefficients are DtN coarse space and GenEO coarse space \cite{MR4299039,MR4621835,HuLi2024,Graham-geneo-2024}, which require the overlapping size of the subdomains much larger than the fine mesh size. Hence it is crucial to construct a coarse space that scales weakly with respect to the wavenumber, while ensuring a stable coarse projection. In the context of the Helmholtz problem with large wavenumbers, ideally the coarse space should overcome the so-called pollution effect with very mild resolution condition over the coarse grid, while maintain coarse grid accuracy.
 
In this work, we propose a coarse space $V_{h,0}$ using the edge multiscale (EMs) spaces \cite{MR3939320,MR3980476,fu2023wavelet,fu2021wavelet}. EMs spaces are one class of  multiscale ansatz spaces \cite{MR2718268,MR2728702,MR3834684,MR3614010,fu2021wavelet,MR4601686}, and were originally developed for approximating the solutions of elliptic PDEs with strongly oscillating coefficients. EMs spaces arise from the global splitting of the solution by a global bubble part and a global Helmholtz harmonic extension part, realized by a local splitting in each subdomain. Since the global bubble part can be recovered locally in parallel, the EMs space is constructed to approximate the global Helmholtz harmonic extension part by solving a homogeneous Hemlholtz equation with hierarchical bases up to level $\ell$ as the Dirichlet boundary data in each subdomain and then use partition of unity functions to glue the local bases together as global multiscale bases. The dimension of $V_{h,0}$ is about $H^{-d}2^{\ell+d}$, and EMs-HS is compatible with 
any sequence of overlapping subdomains that decomposes $\Omega$ and its associated partition of unity.
Inspired by \cite{HuLi2024}, we establish in Proposition \ref{prop:wavelet-basedconv} the convergence of the proposed  EMs-HS preconditioner, i.e., the contraction property of the coarse correction for the global harmonic extension part, using the standard Schatz argument with minimal requirement on the overlapping size, under very mild conditions on the coarse mesh size ($H\lesssim k^{-1}$; see Assumption \ref{ass:resolution}) and the level parameter $\ell$ (see Proposition \ref{prop:wavelet-basedconv} and Remark \ref{rm:l}). 
This is achieved as follows. In Lemma \ref{thm:proj}, we derive the local approximation properties of the coarse space, by transferring the approximation properties over the coarse skeleton to its interior by means of the transposition method \cite{MR0350177} (see also \cite[Appendix A]{fu2021wavelet}).  
Then in Lemma \ref{thm:last}, we derive its approximation properties to the global Helmholtz harmonic extension part and the solution to the adjoint problem by means of representing the global error in terms of local errors.
Proposition \ref{prop:wavelet-basedconv} indicates that in the optimal case with uniform meshes, the proposed iterative solvers only require the dimension of the coarse spaces to be about $k^d$
in order to achieve convergence for the Helmholtz equation with large wavenumbers.

The rest of the paper is structured as follows. We describe the basics on the weak formulation and FEM approximation in Section \ref{sec:prep}. In Section \ref{sec:hybrid-2}, we present  the one-level RAS-imp preconditioner and the hybrid Schwarz preconditioner. Then in Section \ref{sec:coarse-space}, we give the construction of the coarse space, and in Section \ref{sec:converge} prove the convergence of the hybrid Schwarz preconditioner. In Section \ref{sec:num}, we present 2-d and 3-d numerical tests to complement the theoretical findings. Throughout, we define the complex-valued space $V:=H^{1}(\Omega;\mathbb{C}):={W^{1,2}(\Omega;\mathbb{C})}$, equipped with the $k$-weighted norm
\[
\norm{v}_{V}:=\sqrt{\|\nabla v\|_{L^2(\Omega)}^2+k^2\|v\|_{L^2(\Omega)}^2},
\]
and similarly define the complex-valued space $V(D):=H^{1}(D;\mathbb{C})$ also equipped with the $k$-weighted norm
$\norm{\cdot}_{1,k,D}$ for any $D\subset \Omega$.
We denote by $(\cdot,\cdot)_{D}$ and $\|\cdot\|_{L^2(D)}$ the $L^2(D;\mathbb{C})$ inner product and its induced norm for any $D\subset\Omega$, $\mathrm{Re}\{\cdot\}$, $\mathrm{Im}\{\cdot\}$ and $\bar{\cdot}$ the real part, imaginary part and conjugate of a complex number. The notation $a\lesssim b$ means that there exists constant $C>0$, independent of all parameters of interest ($h,k,H$), but may depend on the triangulation and the domain $\Omega$, such that $a\leq Cb$. We also write $a\simeq b$ if $a \lesssim b$ and $b \lesssim a$.

\section{The Helmholtz equation and its finite element approximation}\label{sec:prep}
Let $\Omega\subset \mathbb{R}^d $ ($d=2,3$) be a bounded convex polygonal/polyhedron domain, $f\in (H^1(\Omega;\mathbb{C}))^\prime$ and $g\in H^{-1/2}(\partial\Omega;\mathbb{C})$. Consider the Helmholtz equation subject to the impedance boundary condition
\begin{equation}
    \label{hel_cont}
   \left\{  \begin{split}
    -\Delta u-k^2u&=f,\quad\mbox{in }\Omega,\\
    \frac{\partial u}{\partial n}+iku&=g,\quad\mbox{on }\partial \Omega, 
     \end{split}\right.
\end{equation}
where without loss of generality, we assume the wavenumber $k\geq k_0>0$. 

The variational formulation for \eqref{hel_cont} reads: find $u\in V$ such that 
\begin{equation}
\label{var_form}
    a(u,v)=F(v), \quad\forall v\in V,
\end{equation}
where the sesquilinear form $a(\cdot,\cdot): V\times V\to \mathbb{C}$ and the conjugate linear form $F: V\to \mathbb{C}$ are defined by
\begin{align*}
  a(v_1,v_2)&:= \int_\Omega\nabla v_1\cdot\nabla\overline{v}_2\mathrm{d}x-k^2\int_\Omega v_1\overline{v}_2\;\mathrm{d}x+ ik\int_{\partial \Omega} v_1\overline{ v}_2 \;\mathrm{d}s, \quad\forall v_1,v_2\in V, \\
F(v)&:=\int_\Omega f\overline{v}\;\mathrm{d}x+\int_{\partial \Omega} g\overline{v}\;\mathrm{d}s,\quad\forall v\in V.   
\end{align*}
The following properties of the sesquilinear form $a(\cdot,\cdot)$ hold \cite[Theorem 3.2 and Corollary 3.3]{MR2812565}.
\begin{theorem}[Properties of the sesquilinear form $a(\cdot,\cdot): V\times V\to \mathbb{C}$]\label{them:sesquilinear}
The following properties hold,
\begin{itemize}
\item[1.] The sesquilinear form $a(\cdot,\cdot)$ is bounded: there exists a wavenumber $k$ independent constant $\Const{b}$ such that
    \begin{align}\label{eq:sesqui-bound}
    |a(v_1,v_2)|\leq \Const{b}\|v_1\|_{V}\|v_2\|_V, \quad\forall v_1,v_2\in V.
    \end{align}
\item[2.] The G\aa rding's inequality holds
\begin{align}\label{eq:garding}
\mathrm{Re}\{a(v,v)\}+2k^2\normL{v}{\Omega}^2\geq \|v\|_V^2,\quad\forall v\in V.
\end{align}
\end{itemize}
\end{theorem}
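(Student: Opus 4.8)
The plan is to verify the two claimed properties directly from the definition of $a(\cdot,\cdot)$, bounding its three constituent terms by the Cauchy--Schwarz inequality and a trace inequality while carefully tracking the powers of $k$ encoded in $\norm{\cdot}_V$.

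For the boundedness \eqref{eq:sesqui-bound}, I would estimate termwise. The gradient term obeys $\left|\int_\Omega\nabla v_1\cdot\nabla\overline v_2\,\mathrm{d}x\right|\le \norm{\nabla v_1}_{L^2(\Omega)}\norm{\nabla v_2}_{L^2(\Omega)}\le \norm{v_1}_V\norm{v_2}_V$, and the reaction term obeys $k^2\left|\int_\Omega v_1\overline v_2\,\mathrm{d}x\right|\le \bigl(k\normL{v_1}{\Omega}\bigr)\bigl(k\normL{v_2}{\Omega}\bigr)\le\norm{v_1}_V\norm{v_2}_V$. The only term requiring more than Cauchy--Schwarz is the boundary term, where I would invoke the multiplicative trace inequality $\norm{v}_{L^2(\partial\Omega)}^2\lesssim \normL{v}{\Omega}\bigl(\norm{\nabla v}_{L^2(\Omega)}+\normL{v}{\Omega}\bigr)$ (valid since $\Omega$ is a bounded convex polygon/polyhedron) and then absorb the $k$-weights using Young's inequality and $k\ge k_0$:
\[
k\norm{v}_{L^2(\partial\Omega)}^2 \lesssim k\normL{v}{\Omega}\norm{\nabla v}_{L^2(\Omega)}+k\normL{v}{\Omega}^2 \le \tfrac12\bigl(k^2\normL{v}{\Omega}^2+\norm{\nabla v}_{L^2(\Omega)}^2\bigr)+k_0^{-1}k^2\normL{v}{\Omega}^2 \lesssim \norm{v}_V^2 .
\]
This gives $k^{1/2}\norm{v}_{L^2(\partial\Omega)}\lesssim\norm{v}_V$, so Cauchy--Schwarz on $\partial\Omega$ yields $k\left|\int_{\partial\Omega}v_1\overline v_2\,\mathrm{d}s\right|\le \bigl(k^{1/2}\norm{v_1}_{L^2(\partial\Omega)}\bigr)\bigl(k^{1/2}\norm{v_2}_{L^2(\partial\Omega)}\bigr)\lesssim\norm{v_1}_V\norm{v_2}_V$. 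Summing the three bounds produces \eqref{eq:sesqui-bound} with a constant $\Const{b}$ that is independent of $k$.

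For G\aa rding's inequality \eqref{eq:garding}, I would simply observe that $ik\int_{\partial\Omega}|v|^2\,\mathrm{d}s$ is purely imaginary, hence contributes nothing to the real part, so that $\mathrm{Re}\{a(v,v)\}=\norm{\nabla v}_{L^2(\Omega)}^2-k^2\normL{v}{\Omega}^2$. Adding $2k^2\normL{v}{\Omega}^2$ on both sides gives the identity $\mathrm{Re}\{a(v,v)\}+2k^2\normL{v}{\Omega}^2=\norm{\nabla v}_{L^2(\Omega)}^2+k^2\normL{v}{\Omega}^2=\norm{v}_V^2$, so \eqref{eq:garding} holds (in fact with equality).

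This result is classical and is already attributed to \cite{MR2812565}, so I do not expect a genuine obstacle; the one place a careless argument can go wrong is the boundary term in the boundedness proof, where the $k$-weighting must be distributed so that no spurious factor of $k$ survives --- this is precisely why the \emph{multiplicative} trace inequality (rather than the additive one) together with the standing assumption $k\ge k_0$ is needed. The volume terms and the entire G\aa rding estimate are immediate.
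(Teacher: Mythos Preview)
Your argument is correct: the termwise estimate using Cauchy--Schwarz together with the multiplicative trace inequality and the standing assumption $k\ge k_0$ is exactly the standard route to \eqref{eq:sesqui-bound}, and your observation that the boundary term is purely imaginary immediately yields \eqref{eq:garding} with equality. The paper does not supply its own proof of this theorem but simply cites \cite[Theorem 3.2 and Corollary 3.3]{MR2812565}, so there is nothing further to compare.
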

The existence and uniqueness of the weak solution of \eqref{var_form} can be found in \cite[Proposition 8.1.3]{melenk1995generalized}. 
The adjoint problem to \eqref{var_form} is frequently utilized in the analysis of indefinite problems satisfying the G\aa rding's inequality \eqref{eq:garding}, which is defined as given $w\in V'$, find $z\in V$ such that
\begin{align}\label{eqn:weakformDual}
a^*(z, v)=(v,w)_{\Omega}, \quad\forall v\in V,
\end{align}
where $a^*(\cdot,\cdot)$ is defined by (see, e.g., \cite{MR1742312})
$a^*(w, v)=\overline{a(v,w)}$ for all $ w,v\in V$.
To discretize problem \eqref{hel_cont}, we first decompose the domain $\Omega$. Let $\mathcal{T}_h$ be a quasi-uniform triangulation of $\Omega$ with a mesh size $h$. Let $V_h$ be the $H^1$-conforming piecewise linear finite element space
\[
V_h:=\{v\in H^{1}(\Omega): v|_{T}\in \mathcal{P}_{1}(T),\;\forall T\in \mathcal{T}_h\},
\]
where $\mathcal{P}_1(T)$ denotes the space of linear polynomials on the fine element $T\in \mathcal{T}_h$. Let $\Pi_h: V\to V_h$ be the standard nodal interpolation operator. The $H^1$-conforming Galerkin approximation to problem \eqref{var_form} is to find $u_h\in V_h$ such that 
\begin{equation}\label{eq:discrete-global}
    a(u_h,v_h)=F(v_h), \quad\forall v_h\in V_h.
\end{equation}
Throughout, we assume the approximation property for the FEM space $V_h$: for all $v\in H^2(\Omega)$, there holds
\begin{align}\label{approxi}
   \inf_{v_h\in V_h} \left(\|v-v_h\|_{L^2(\Omega)}+h\|\nabla (v- v_h)\|_{L^2(\Omega)}\right)\lesssim h^2|v|_{H^2(\Omega)}.
\end{align}
Using property \eqref{approxi}, we can establish the following stability and quasi-optimality results of the discrete problem \cite[Proposition 8.2.7]{melenk1995generalized}. 
\begin{theorem}[Discrete inf-sup condition]
\label{thm_1}
Let $\Omega$ be a convex polygon/polyhedron domain. Then there exists a constant $C_0$ independent of $k$ and $h$, such that if the triangulation mesh satisfy 
\begin{align}\label{eq:fine-mesh-cond}
hk^{2}\leq C_0,
\end{align}
then the following discrete inf-sup condition holds
\begin{equation*}
    \inf_{v_h\in V_h\backslash\{0\}}\sup_{w_h\in V_h\backslash\{0\}}\frac{ |a(v_h,w_h)|}{\|v_h\|_{V}\|w_h\|_{V}}
    \geq \frac{1}{\Const{stab}k}
\end{equation*}
with the constant $\Const{stab}$ independent of $h$ and $k$. Then \eqref{eq:discrete-global} is well-posed, and its solution $u_h$ satisfies
\begin{equation*}
    \|u-u_{h}\|_{V} \leq\Const{stab}  hk (\|f\|_{L^2(\Omega)}+\|g\|_{L^2(\partial\Omega)}).
\end{equation*}
\end{theorem}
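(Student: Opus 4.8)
The plan is to run the classical duality (Schatz--Aubin--Nitsche) argument for G\aa rding-type indefinite problems, keeping track of every constant in $k$ and $h$. The one extra object needed is the \emph{adjoint approximation quantity}
\[
\eta(V_h):=\sup_{0\neq w\in L^2(\Omega)}\frac{1}{\|w\|_{L^2(\Omega)}}\inf_{v_h\in V_h}\|z_w-v_h\|_{V},
\]
where $z_w\in V$ solves the adjoint problem \eqref{eqn:weakformDual} with datum $w$. First I would prove $\eta(V_h)\lesssim hk$. This rests on the $k$-explicit continuous estimates on a convex domain, $\|z_w\|_{V}\lesssim\|w\|_{L^2(\Omega)}$ and $|z_w|_{H^2(\Omega)}\lesssim k\|w\|_{L^2(\Omega)}$; the $H^2$-bound follows from full elliptic regularity of $-\Delta$ on convex polygons/polyhedra (with a $k$-independent constant) together with $\|\Delta z_w\|_{L^2(\Omega)}\le\|w\|_{L^2(\Omega)}+k^2\|z_w\|_{L^2(\Omega)}\lesssim k\|w\|_{L^2(\Omega)}$, using $k\|z_w\|_{L^2(\Omega)}\le\|z_w\|_{V}$ and $k\ge k_0$. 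Feeding $z_w\in H^2(\Omega)$ into the FEM approximation property \eqref{approxi} gives $\inf_{v_h\in V_h}\|z_w-v_h\|_{V}^2\lesssim(h^2+k^2h^4)|z_w|_{H^2(\Omega)}^2\lesssim h^2|z_w|_{H^2(\Omega)}^2$ once $hk\lesssim1$ (which follows from $hk^2\le C_0$ and $k\ge k_0$). Hence $\eta(V_h)\lesssim hk$, and in particular $k\,\eta(V_h)\lesssim hk^2\le C_0$.

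Next I would establish the discrete inf-sup bound. Fix $v_h\in V_h$, let $z\in V$ solve $a^*(z,\cdot)=(\cdot,v_h)_{\Omega}$ so that $\|v_h\|_{L^2(\Omega)}^2=\overline{a(v_h,z)}$, and let $\widetilde z\in V_h$ be a best $V$-approximation of $z$. Writing $z=(z-\widetilde z)+\widetilde z$, applying \eqref{eq:sesqui-bound} with $\|z-\widetilde z\|_{V}\le\eta(V_h)\|v_h\|_{L^2(\Omega)}$, and absorbing the $\tfrac12\|v_h\|_{L^2(\Omega)}^2$ term produced by Young's inequality yields $\|v_h\|_{L^2(\Omega)}^2\lesssim\eta(V_h)^2\|v_h\|_{V}^2+|a(v_h,\widetilde z)|$, while $\|\widetilde z\|_{V}\lesssim\|v_h\|_{L^2(\Omega)}\le k^{-1}\|v_h\|_{V}$. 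Inserting this into G\aa rding's inequality \eqref{eq:garding}, $\|v_h\|_{V}^2\le|a(v_h,v_h)|+2k^2\|v_h\|_{L^2(\Omega)}^2$, the term $k^2\eta(V_h)^2\|v_h\|_{V}^2$ is absorbed provided $C_0$ is small enough (since $k\,\eta(V_h)\lesssim hk^2\le C_0$), leaving $\|v_h\|_{V}^2\lesssim|a(v_h,v_h)|+k^2|a(v_h,\widetilde z)|$. Choosing unimodular $\theta_1,\theta_2$ that turn $a(v_h,v_h)$ and $a(v_h,\widetilde z)$ into their moduli and testing against $w_h:=\overline{\theta_1}v_h+ck^2\overline{\theta_2}\widetilde z\in V_h$ for a suitable numerical constant $c$, one gets $\|v_h\|_{V}^2\lesssim|a(v_h,w_h)|$ and $\|w_h\|_{V}\lesssim\|v_h\|_{V}+k^2\|\widetilde z\|_{V}\lesssim k\|v_h\|_{V}$; dividing gives $|a(v_h,w_h)|/(\|v_h\|_{V}\|w_h\|_{V})\gtrsim k^{-1}$, i.e.\ the claimed bound with $\Const{stab}$ independent of $h,k$. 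Well-posedness of \eqref{eq:discrete-global} is then immediate, since on the finite-dimensional space $V_h$ the inf-sup bound forces the Galerkin operator to be injective, hence invertible.

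For the error estimate I would repeat the argument on the Galerkin error $e:=u-u_h$, which obeys $a(e,v_h)=0$ for all $v_h\in V_h$. The duality step now gives directly $\|e\|_{L^2(\Omega)}\le\Const{b}\,\eta(V_h)\|e\|_{V}$, because Galerkin orthogonality annihilates the best approximation of $z_e$ in $V_h$; hence $k\|e\|_{L^2(\Omega)}\lesssim hk^2\|e\|_{V}\le C_0\|e\|_{V}$ up to a fixed constant. Plugging this into \eqref{eq:garding} and using $a(e,e)=a(e,u-v_h)$ for arbitrary $v_h\in V_h$ (orthogonality) together with \eqref{eq:sesqui-bound}, the $k^2\|e\|_{L^2(\Omega)}^2$ term is again absorbed for $C_0$ small, and one obtains the $k$-uniform quasi-optimality $\|u-u_h\|_{V}\le2\Const{b}\inf_{v_h\in V_h}\|u-v_h\|_{V}$. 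Finally, \eqref{approxi} applied to $u$ and the a priori bound $|u|_{H^2(\Omega)}\lesssim k(\|f\|_{L^2(\Omega)}+\|g\|_{L^2(\partial\Omega)})$ (derived exactly as for $z_w$, now using the $k$-uniform continuous stability $\|u\|_{V}\lesssim\|f\|_{L^2(\Omega)}+\|g\|_{L^2(\partial\Omega)}$ valid on convex domains) give $\inf_{v_h\in V_h}\|u-v_h\|_{V}\le\|u-\Pi_h u\|_{V}\lesssim h|u|_{H^2(\Omega)}\lesssim hk(\|f\|_{L^2(\Omega)}+\|g\|_{L^2(\partial\Omega)})$, which is the stated bound.

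I expect the main obstacle to be the $k$-explicit regularity theory: proving $\|z_w\|_{V}\lesssim\|w\|_{L^2(\Omega)}$, $|z_w|_{H^2(\Omega)}\lesssim k\|w\|_{L^2(\Omega)}$, and the analogous bounds for $u$, with constants independent of $k$, is precisely where convexity of $\Omega$ enters---via a Rellich--Morawetz identity for the $H^1$-estimate and elliptic shift theory for the $H^2$-estimate. The remaining quantitative point is to verify that every absorption step above requires smallness of the \emph{same} quantity $hk^2$, so that a single threshold $C_0$ in \eqref{eq:fine-mesh-cond} serves all of them simultaneously; once this is arranged, the rest is bookkeeping with the $k$-weighted norm $\|\cdot\|_{V}$.
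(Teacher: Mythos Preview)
Your proposal is correct and follows the classical Schatz argument, which is precisely the approach underlying the result the paper cites. Note, however, that the paper does not give its own proof of this theorem: it simply invokes \cite[Proposition~8.2.7]{melenk1995generalized} after stating the approximation property \eqref{approxi}, so there is no in-paper argument to compare against beyond that reference. Your sketch is a faithful reconstruction of that standard proof, with the $k$-explicit regularity ($\|z_w\|_V\lesssim\|w\|_{L^2(\Omega)}$ and $|z_w|_{H^2(\Omega)}\lesssim k\|w\|_{L^2(\Omega)}$ on convex $\Omega$) correctly identified as the crux.
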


\begin{remark}\label{rm:mesh-resolution}
The mesh resolution condition \eqref{eq:fine-mesh-cond} guarantees that the numerical solution $u_h$ achieves a high accuracy as the wavenumber $k$ increases. 
It can be relaxed if $H^1$-conforming piecewise $p$ order finite element space is used with $p$ large or mere existence of numerical solutions is pursued. The existence of numerical solutions with large wavenumber $k$ was studied in the seminal work of Ihlenburg and Babu\v{s}ka \cite{ihlenburg1995dispersion, ihlenburg1995finite}. Under the assumption that the stability of the Helmholtz problem is independent of $k$, the Galerkin method using the $h$-version of FEM is quasi-optimal when $k^{p+1}h^p$ is sufficiently small \cite{melenk1995generalized, sauter2006refined, melenk2010convergence, MR2812565}. For the $hp$-version (where accuracy is increased by decreasing $h$ and increasing $p$), the Galerkin method is quasi-optimal if $hk/p$ is sufficiently small and $p$ grows logarithmically with $k$ by \cite{melenk2010convergence, MR2812565}. Under a weaker condition that $k^{2p+1}h^{2p}$ is small enough, the numerical solution exists and the numerical error is bounded by the data uniformly with respective to $k$ \cite{du2015preasymptotic}. In all these cases  with quasi-uniform meshes, the magnitude of the dimensions of the piecewise polynomial function spaces is  much larger than $k^d$.
\end{remark}

\section{The hybrid Schwarz method}\label{sec:hybrid-2}
Now we present the basic assumption for the domain decomposition and the notation for the Schwarz preconditioners. Let 
$\mathcal{T}_H : = \{K_i~|~i=1,2,\cdots, N\}
$ be a quasi-uniform partition of the domain $\Omega$ into a structured mesh with a mesh size $H$ such that $\mathcal{T}_h$ is its subdivision, where $N$ is the number of the coarse elements (or the subdomains).  Let $\Omega_{i}$ be the subdomain associated with $K_{i}\in \mathcal{T}_H$, defined by
$\Omega_{i}=\bigcup\left\{T\in\mathcal{T}_h:\text{dist}(T,K_{i})\leq\delta_{i}\right\}$.
Denote by $\delta:=\min_{K_{i}\in \mathcal{T}_H}\delta_{i}$ the smallest value. Then $\{\Omega_{i}\}_{i=1}^N$ is an overlapping partition of $\Omega$, i.e., 
$\overline{\Omega}=\bigcup_{i=1}^N\overline{\Omega}_{i}$. Let $d_i:=\op{diam}(\Omega_i)$ be the diameter of $\Omega_i$. By construction, $d_i\leq H+\delta_i$. Throughout, we assume 
\begin{align}\label{eq:c-diameter}
d_i\approx H, \quad\forall i=1,\cdots,N.
\end{align}
We make the {\it finite-overlap assumption}: there exists a finite number $\Lambda>1$ independent of $N$ such that
\begin{align}\label{eq:finite-overlap}
\Lambda=\max\left\{\#\Lambda_{i}:K_{i}\in\mathcal{T}_H\right\}, \quad \text{ with }\Lambda_{i}=\{j:\bar{\Omega}_{i}\cap\bar{\Omega}_{j}\neq \emptyset\}.
\end{align}

The coarse scale mesh size $H$ is required to satisfy the so-called scale resolution assumption. 
\begin{assumption}[Scale Resolution Assumption]\label{ass:resolution}
The coarse mesh size $H$ is sufficiently small so that $\max_{i=1,2,\ldots,N}\{\Cpoinn{\Omega_i}{1/2}\}Hk<1$,
with the Poincar\'{e} constant $\Cpoin{\Omega_i}: =\max_{v\in H^1_0(\Omega_i)}
\frac{\|v\|_{L^2(\Omega_i)}^2}{\|H\nabla v\|_{L^2(\Omega_i)}^2}$ depending on the shape of $\Omega_i$ but not on $H$.
\end{assumption}

Throughout, we denote 
\begin{equation}
\Ce:=\Big(1-\max_{i=1,2,\ldots,N}\{\Cpoin{\Omega_i}\}(Hk)^2\Big)^{-1}.
\end{equation}
Note that for most regular domains, the Friedrichs' inequality implies that $\Cpoin{\Omega_i}:={d_i^2}/({H\pi})^2$. Hence, Assumption \ref{ass:resolution} with \eqref{eq:c-diameter} is equivalent to $H\le \pi k^{-1}$.
Next, we introduce a partition of unity $\{\chi_{i}\}_{i=1}^N$ subordinate to the cover $\{\Omega_{i}\}_{i=1}^N$, which is assumed to satisfy for some $C_{\infty}$ and $C_{\op{G}}$ independent of $(h,k,H)$,
\begin{equation}\label{eq:pum-property}
\left\{
\begin{aligned}
\sum\limits_{i=1}^N\chi_{i}&=1 \text{ in } \bar{\Omega},\quad\text{and}\quad {\text{supp}(\chi_{i})}\subset\bar{\Omega}_{i},\\
 \| \chi_{i}\|_{L^{\infty}(\bar{\Omega}_{i})}&\leq C_{\infty},\quad\text{and}\quad 
\|\nabla \chi_{i}\|_{L^{\infty}(\bar{\Omega}_{i})}\leq {\delta_{i}}^{-1}C_{\op{G}}.
\end{aligned}
\right.
\end{equation}
It follows immediately from \eqref{eq:finite-overlap} that, for $v_i\in H^1(\Omega_i)$,
\begin{equation}\label{eq:sumpou-ieq}
\left\|\sum_{i=1}^N \chi_i v_i\right\|_{L^2(\Omega)}^2 \le \Lambda \sum_{i=1}^N \left\| \chi_i v_i\right\|_{L^2(\Omega_i)}^2,\quad \text{and}\quad \left\|\sum_{i=1}^N\nabla( \chi_i v_i)\right\|_{L^2(\Omega)}^2 \le \Lambda \sum_{i=1}^N \left\| \nabla(\chi_i v_i)\right\|_{L^2(\Omega_i)}^2.
\end{equation}

\subsection{RAS-imp preconditioner}
Now we formulate the one-level Schwarz domain decomposition method in an operator form.
First we define the linear operator ${A}:V_{h}\to V_{h}'$ by
\begin{align}\label{eq:Ah-linear}
\langle {A}v_h,w_h\rangle:=a(v_h,w_h),\quad \forall v_h, w_h \in V_{h},
\end{align}
where $\langle \cdot,\cdot\rangle$ denotes the duality paring. 
Then the global linear system can be written as 
\begin{equation}
    \label{lin_form}
   A u_h=F_h,
\end{equation}
where $F_h\in V_h'$ is defined as $\langle F_h,v_h\rangle := F(v_h)$~ for all $v_h\in V_h$.

Next, we consider the local problems with impedance boundary conditions. Let the local function spaces be
$V_{h,i}:=\{v_h|_{\overline{\Omega}_{i}}: v_h\in V_h\}$ and define the local operator $A_i :V_{h,i}\to V_{h,i}'$ by 
\begin{align*}
\langle {A}_{i}u_i,v_i\rangle:=a_{i}(u_i,v_i), \forall u_i, v_i \in V_{h,i},
\end{align*}
where the local sesquilinear form $a_{i}(\cdot,\cdot)$ on the local space $V_{h,i} \times V_{h,i}$ is given by
\begin{equation*}
\label{local_ses}
    a_{i}(u_i,v_i) 
    := \int_{\Omega_i}\nabla u_i\cdot\nabla\overline{ v}_i\mathrm{d}x
    -k^2\int_{\Omega_i} u_i\overline{v}_i\;\mathrm{d}x
    + ik\int_{\partial \Omega_i} u_i\overline{ v}_i \;\mathrm{d}s,\quad
    \forall u_i, v_i\in V_{h,i}.
\end{equation*}
Then we define prolongations and restrictions to link local and global problems. Note that the functions in $V_h$ and $V_{h,i}$ are uniquely determined by their nodal values at the nodes $\{x_j: j\in \mathcal{I}\}$ in $\overline{\Omega}$ and $\{x_j: j\in \mathcal{I}_i\}$ in $\overline{\Omega}_i$, respectively, using the index sets $\mathcal{I}$ and $\mathcal{I}_i$. We define the prolongation ${R}_i^T: V_{h,i} \to V_h$  by
\begin{align*}
R_i^Tv_i(x_j)=\left\{
\begin{aligned}
v_i(x_j), \quad j\in \mathcal{I}_i,\\
0,\quad \text{otherwise}.
\end{aligned}\right.
\end{align*}
Note that the prolongation ${R}_i^T$ is defined nodewise and ${R}_i^Tv_i $ is an $H^1$-conforming finite element approximation to the zero extension of $v_{i}\in V_{h,i}$. (The zero extension is not in $H^1(\Omega)$ in general). We define $R_i: V'_{h}\to V'_{h,i}$ by duality. To make the prolongation more stable, we employ the weighted prolongation $\tilde{R}_i^T:V_{h,i}\to V_h$ by the functions of partition of unity:
\begin{equation*}
\tilde{R}_i^T v_{i}:  = {R}_i^T \Pi_{h,i}(\chi_i v_i),
\end{equation*}
where $\Pi_{h,i}$ is the nodal interpolation of $V_{h,i}$. Note that 
\begin{align}\label{eq:pou}
\sum_{i=1}^N \tilde{R}_i^T(v_h|_{\Omega_i})=v_h, \quad\forall v_h\in V_h.
\end{align}

The restricted additive Schwarz preconditioner with local impedance boundary condition (RAS-imp) is the operator $B_{\op{RAS-imp}}: V_{h}'\to V_h$ defined by
\begin{equation*}
B_{\op{RAS-imp}} = \sum_{i=1}^N \tilde{R}_i^T  A_i^{-1}R_i.
\end{equation*}
The preconditioned operator $B_{\op{RAS-imp}} A :V_h\to V_h$ can be written as
\begin{equation*}
B_{\op{RAS-imp}}A = \sum_{i=1}^N  \tilde{R}_i^T Q_i,
\end{equation*}
where the local Schwarz operator $Q_i:V_h\to V_{h,i}$ is defined by
\begin{align*}
a_i(Q_i v, w_i) := a(v,R_i^T w_i),\quad \forall v\in V_h\text{ and } w_i \in V_{h,i}.
\end{align*}
\begin{theorem}[Local-global approximation]
\label{thm:pum-2}
Let $v_i\in V_{h,i}$ for $i=1,2,\cdots,N$, and let
$v_h:=\sum_{i=1}^N \tilde{R}_i^T v_i\in V_h$.
Then there exists $\Const{I}$ independent of $(h,k,H)$ such that
\begin{align}
\|v_h\|_{L^2(\Omega)}&\leq \sqrt{\Lambda}\left(\sum_{i=1}^N C_{\infty}^2\|v_i\|_{L^2(\Omega_i)}^2
+C_{I}^2h^2\left\|\nabla (\chi_i v_i)\right\|_{L^2(\Omega_i)}^2\right)^{1/2},\nonumber\\
\|\nabla v_h\|_{L^2(\Omega)}&\leq \sqrt{\Lambda(1+\Const{I}^2)}\left(\sum_{i=1}^N
\left\|\nabla (\chi_i v_i)\right\|_{L^2(\Omega_i)}^2\right)^{1/2}.\label{eq:glo-err-pum2}
\end{align}
\end{theorem}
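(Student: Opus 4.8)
The plan is to estimate the two quantities $\|v_h\|_{L^2(\Omega)}$ and $\|\nabla v_h\|_{L^2(\Omega)}$ separately, starting from the definition $v_h=\sum_{i=1}^N \tilde R_i^T v_i = \sum_{i=1}^N R_i^T \Pi_{h,i}(\chi_i v_i)$, and exploiting the finite-overlap bound \eqref{eq:sumpou-ieq}. The central technical point is that $\Pi_{h,i}(\chi_i v_i)$ differs from $\chi_i v_i$ only by the standard nodal-interpolation error, which is of order $h$ in $L^2$ and is controlled by $h\|\nabla(\chi_i v_i)\|_{L^2(\Omega_i)}$ via \eqref{approxi} applied elementwise (note $\chi_i v_i$ is piecewise smooth so $H^2$-type estimates hold on each fine element, and the second derivatives drop out since $\chi_i v_i$ is piecewise linear on $\mathcal T_h$ — only the jump/gradient terms survive, giving the cleaner bound $\|\chi_i v_i - \Pi_{h,i}(\chi_i v_i)\|_{L^2(\Omega_i)}\lesssim h\|\nabla(\chi_i v_i)\|_{L^2(\Omega_i)}$). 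Since $R_i^T$ is a nodewise zero-extension that preserves nodal values, $\|R_i^T w_i\|_{L^2(\Omega)}\simeq\|w_i\|_{L^2(\Omega_i)}$ up to a shape-regularity constant, but more importantly one wants to stay inside $\Omega_i$: write $v_h=\sum_i \chi_i \widehat{v_i}$ type expression is not quite available because of the interpolation, so instead keep $v_h = \sum_i R_i^T\Pi_{h,i}(\chi_i v_i)$ and bound directly.

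For the $L^2$-estimate I would proceed as follows. Insert and subtract $\chi_i v_i$:
\begin{equation*}
v_h=\sum_{i=1}^N R_i^T(\chi_i v_i) + \sum_{i=1}^N R_i^T\bigl(\Pi_{h,i}(\chi_i v_i)-\chi_i v_i\bigr).
\end{equation*}
The first sum equals $\sum_i \chi_i v_i$ (as functions on $\Omega$, since $\chi_i v_i$ vanishes near $\partial\Omega_i$ by the support condition in \eqref{eq:pum-property}, so its zero extension is genuinely $\chi_i v_i$ and the nodal extension reproduces it). Apply the first inequality in \eqref{eq:sumpou-ieq} to get $\|\sum_i\chi_i v_i\|_{L^2(\Omega)}^2\le \Lambda\sum_i\|\chi_i v_i\|_{L^2(\Omega_i)}^2\le \Lambda C_\infty^2\sum_i\|v_i\|_{L^2(\Omega_i)}^2$. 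For the second sum, use again a finite-overlap argument (the supports of $R_i^T(\Pi_{h,i}(\chi_iv_i)-\chi_iv_i)$ are contained in $\bar\Omega_i$) together with the elementwise interpolation bound to get $\lesssim \Lambda C_I^2 h^2\sum_i\|\nabla(\chi_i v_i)\|_{L^2(\Omega_i)}^2$. Combining with the triangle inequality and absorbing cross terms into the stated $\ell^2$-sum-of-squares form yields the first inequality. For the $\nabla$-estimate, repeat the same splitting: $\nabla(\sum_i\chi_i v_i)$ is handled by the second inequality in \eqref{eq:sumpou-ieq}, giving $\le \Lambda\sum_i\|\nabla(\chi_iv_i)\|_{L^2(\Omega_i)}^2$; the interpolation-error piece contributes $\lesssim\Lambda C_I^2\sum_i\|\nabla(\chi_iv_i)\|^2_{L^2(\Omega_i)}$ because the inverse inequality converts the $h^{-1}$ in $\|\nabla(\Pi_{h,i}w-w)\|\lesssim\|\nabla w\|$ (again with the piecewise-linear structure of $\chi_iv_i$ making the right-hand side just $\|\nabla(\chi_iv_i)\|$). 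Summing gives the factor $\Lambda(1+C_I^2)$ under the square root.

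The main obstacle is bookkeeping the interpolation operator $\Pi_{h,i}$ correctly: one must verify that for $w_i\in V_{h,i}$ of the form $\chi_i v_i$ — which is \emph{not} piecewise linear because $\chi_i$ is only Lipschitz — the nodal interpolant error satisfies the clean bounds $\|\chi_iv_i-\Pi_{h,i}(\chi_iv_i)\|_{L^2(T)}\lesssim h_T\|\nabla(\chi_iv_i)\|_{L^2(T)}$ and $\|\nabla(\Pi_{h,i}(\chi_iv_i))\|_{L^2(T)}\lesssim\|\nabla(\chi_iv_i)\|_{L^2(T)}$ on each $T\in\mathcal T_h$. This is where the precise regularity assumptions on $\chi_i$ in \eqref{eq:pum-property} and quasi-uniformity of $\mathcal T_h$ enter; the standard Bramble--Hilbert / scaling argument gives it, but care is needed because $\chi_i v_i|_T$ need only be in $W^{1,\infty}(T)$, so one should use the $W^{1,\infty}$-stable or Clément-type estimate for nodal interpolation of Lipschitz functions rather than the $H^2$-based \eqref{approxi}. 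Once that local estimate is in hand with a constant $C_I$ depending only on shape-regularity, the rest is the routine finite-overlap summation sketched above, and the constant $\Const{I}$ is then identified as that interpolation constant.
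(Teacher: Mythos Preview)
Your proposal is correct and follows essentially the same route as the paper: split $\Pi_{h,i}(\chi_i v_i)=\chi_i v_i+(\Pi_{h,i}-I)(\chi_i v_i)$, invoke the nodal-interpolation error/stability bound $\|(I-\Pi_{h,i})(\chi_i v_i)\|_{L^2(\Omega_i)}+h\|\nabla(I-\Pi_{h,i})(\chi_i v_i)\|_{L^2(\Omega_i)}\le \Const{I}h\|\nabla(\chi_i v_i)\|_{L^2(\Omega_i)}$ (the paper simply cites this, while you correctly flag that the $W^{1,\infty}$-based argument is what is really needed since $\chi_i v_i\notin H^2$), and then sum via the finite-overlap estimate \eqref{eq:sumpou-ieq}. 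The only cosmetic caveat is that $R_i^T$ is defined on $V_{h,i}$, so your decomposition $\sum_i R_i^T(\chi_i v_i)+\sum_i R_i^T\bigl((\Pi_{h,i}-I)(\chi_i v_i)\bigr)$ should be read with $R_i^T$ replaced by ``zero extension to $\Omega$''; this is harmless since both summands vanish on $\partial\Omega_i$ and the finite-overlap bound still applies.
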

\begin{proof}
This follows from  the triangle inequality and the stability of  $\Pi_{h,i}$ (see, e.g., \cite{MR520174}),
\begin{align*}
\left\|(I-\Pi_{h,i})(\chi_i v_i)\right\|_{L^2(\Omega_i)}
+h\left\|\nabla (I-\Pi_{h,i})(\chi_i v_i)\right\|_{L^2(\Omega_i)}
\leq \Const{I}h\left\|\nabla (\chi_i v_i)\right\|_{L^2(\Omega_i)},\quad\forall v_i\in V_{h,i},
\end{align*}
and \eqref{eq:sumpou-ieq} by the finite-overlap assumption in \eqref{eq:finite-overlap}.
\end{proof}
\subsection{The EMs-HS preconditioner}
The hybrid preconditioner is a multiplicative assembly of the one level RAS-imp preconditioner $B_{\op{RAS-imp}}$ with a coarse solver, which is determined by a properly defined coarse space $V_{h,0}$ (see Section \ref{sec:coarse-space} for details). Assuming a natural prolongation $R_0^T:V_{h,0} \to V_h$, we define the coarse problem 
\begin{align*}
A_0 := R_0 A R_0^T : V_{h,0}\to V_{h,0}'.
\end{align*}
where $R_0 :V_{h}'\to V_{h,0}'$ is defined by duality. Then the EMs-HS preconditioner is defined by 
\begin{equation*}
B_{\op{hybrid}} : =  R_0^TA_0^{-1} R_0(I - A B_{\op{RAS-imp}}) + B_{\op{RAS-imp}}.
\end{equation*}
Note that the RAS-imp preconditioner $B_{\op{RAS-imp}}$ is applied only once in the computation of $B_{\op{hybrid}}$. More precisely, given a residual equation $Ae^{(0)} = r^{(0)}$, one first computes the one-level correction by $e^{(1)} = B_{\op{RAS-imp}}r^{(0)}$, then calculates the new residual $r^{(1)} = r^{(0)} - A e^{(1)}$ and solves for the coarse correction $e^{(2)} = R_0^TA_0^{-1} R_0r^{(1)}$. Then the final correction is  
$B_{\op{hybrid}} r^{(0)} = e^{(1)}+e^{(2)}$.

The hybridly preconditioned operator can be written as 
\begin{equation*}
\begin{aligned}
B_{\op{hybrid}}A &= R_0^TA_0^{-1} R_0A(I -  B_{\op{RAS-imp}}A) + B_{\op{RAS-imp}}A\\
\end{aligned}.
\end{equation*}
Analogously, we define the coarse Schwarz operator $Q_0 = A_0^{-1} R_0A:V_h\to V_{h,0} $. Using the hybrid preconditioner $B_{\op{hybrid}}$, we obtain a fixed-point iteration for \eqref{lin_form}
\begin{align*}
u^{(n+1)}_h = u^{(n)}_h +B_{\op{hybrid}} (F_h - A u^{(n)}_h), 
\end{align*}
with its associate error transfer operator being
\begin{equation*}
\begin{aligned}
I-B_{\op{hybrid}}A  &= \left(I-R_0^TQ_0\right)\left(I - B_{\op{RAS-imp}}A\right) 
 = \Big(I-R_0^TQ_0\Big)\left(I - \sum_{i=1}^N\tilde{R}^T_i Q_i\right).
\end{aligned}
\end{equation*}
Furthermore, \eqref{eq:pou} implies
\begin{equation*}
\left(I-B_{\op{hybrid}}A \right)v_h 
= \left(I-R_0^TQ_0\right)\sum_{i=1}^N \tilde{R}^T_i  \left(v_h|_{\Omega_i}- Q_i v_h\right) \quad\forall v_h\in V_h.
\end{equation*}
Equivalently, we derive for any $v_h\in V_h$, 
\begin{align}
B_{\op{hybrid}}A v_h=\left(I-R_0^TQ_0\right)v_h^{\partial}+v_h \quad\mbox{with } v_h^{\partial}:=\sum_{i=1}^N \tilde{R}^T_{i}\left(Q_i v_h-v_h|_{\Omega_i}\right). \label{eq:proj-edge-err}
\end{align}
The main objective is to show that the operator $I-B_{\op{hybrid}}A $ is contractive under minimum requirement on the overlapping size $\delta$ and a mild resolution condition on $H$.
Finally, we give an auxiliary result on the discrepancy between the local Schwarz projection $Q_i v_h$ and the restriction  $v_h|_{\Omega_i}$.

\begin{lemma}\label{lem:decomp}
Let the fine grid mesh size $h$, the coarse grid mesh size $H$ and the diameter $d_i$ of each subdomain $\Omega_i$ satisfy \eqref{eq:fine-mesh-cond}, Assumption \ref{ass:resolution} and \eqref{eq:c-diameter}, respectively. Given $v_h\in V_h$, let $v_h^{\partial}$ be defined in \eqref{eq:proj-edge-err}.  Then with $\sigma:=(kH)^{-3/2}(kh)^{-1/2}(k\delta)^{-1}$, there exists $C_{\partial}$ independent of $(h,k,H)$ such that
\begin{align}\label{eq:vhpar}
\|v_h^{\partial}\|_V\leq C_{\partial}\sigma \|v_h\|_V.
\end{align}
\end{lemma}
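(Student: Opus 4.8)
The plan is to reduce \eqref{eq:vhpar} to a per-subdomain estimate for the Schwarz defects and then to reassemble it using the partition of unity. By \eqref{eq:proj-edge-err} we have $v_h^{\partial}=\sum_{i=1}^N\tilde{R}_i^T e_i\in V_h$ with $e_i:=Q_iv_h-v_h|_{\Omega_i}\in V_{h,i}$, so it suffices to combine (i) a local bound
\[
\|e_i\|_{1,k,\Omega_i}\ \lesssim\ (kH)^{-3/2}(kh)^{-1/2}\,\|v_h\|_{1,k,\widetilde{\Omega}_i},\qquad \overline{\widetilde{\Omega}}_i:=\cup\{\overline{T}\in\mathcal{T}_h:\overline{T}\cap\Omega_i\neq\emptyset\},
\]
with (ii) the local-to-global inequalities of Theorem~\ref{thm:pum-2}. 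The factor $(k\delta)^{-1}$ in $\sigma$ is produced entirely by step (ii), through the partition-of-unity gradient bound $\|\nabla\chi_i\|_{L^{\infty}(\bar\Omega_i)}\le C_{\op{G}}\delta_i^{-1}$ from \eqref{eq:pum-property}.

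For (i) I would invoke \cite[Lemma~4.4]{HuLi2024} (with $d_i\simeq H$, so that Assumption~\ref{ass:resolution} gives $kH\lesssim1$); a direct argument runs as follows. Testing the definition of $Q_i$ against $w_i\in V_{h,i}$ and subtracting $a_i(v_h|_{\Omega_i},w_i)$ yields $a_i(e_i,w_i)=a(v_h,R_i^Tw_i)-a_i(v_h|_{\Omega_i},w_i)=:G_i(w_i)$. Since $R_i^Tw_i$ is defined nodewise, agrees with $w_i$ on $\Omega_i$, and is supported in $\widetilde{\Omega}_i$, all bulk and impedance contributions over $\Omega_i$ cancel, so $G_i(w_i)$ reduces to the form $\int_{\widetilde{\Omega}_i\setminus\Omega_i}(\nabla v_h\cdot\nabla\overline{R_i^Tw_i}-k^2v_h\overline{R_i^Tw_i})$ over the one-element-wide collar $\widetilde{\Omega}_i\setminus\Omega_i$, minus the interface term $ik\int_{\partial\Omega_i\cap\Omega}v_h\overline{w_i}\,\mathrm{d}s$ (for subdomains meeting $\partial\Omega$ there are extra impedance terms on $\partial\Omega\cap\widetilde{\Omega}_i$, handled the same way). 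One bounds $|G_i(w_i)|$ by Cauchy--Schwarz, an inverse estimate on the collar --- giving $\|R_i^Tw_i\|_{1,k,\widetilde{\Omega}_i\setminus\Omega_i}\lesssim h^{-1/2}\|w_i\|_{L^2(\partial\Omega_i)}$, which is the origin of the $(kh)^{-1/2}$ factor --- and a trace inequality on $\Omega_i$ controlling $\|v_h\|_{L^2(\partial\Omega_i)}$ and $\|w_i\|_{L^2(\partial\Omega_i)}$ by the $(1,k)$-norms (the enlarged-patch norm $\|v_h\|_{1,k,\widetilde{\Omega}_i}$ enters here). Finally one inverts $a_i$ on $V_{h,i}$, which is stable since $hk^2\lesssim1$ and $kd_i\simeq kH\lesssim1$ (a G\aa rding shift of $a_i$ is absorbed by the impedance term; cf.\ Theorem~\ref{thm_1} and \cite[Lemma~4.4]{HuLi2024}), yielding $\|e_i\|_{1,k,\Omega_i}\lesssim\sup_{0\neq w_i\in V_{h,i}}|G_i(w_i)|/\|w_i\|_{1,k,\Omega_i}$ and hence (i).

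For (ii), apply Theorem~\ref{thm:pum-2} with $v_i=e_i$. The key estimate is that, using $\|e_i\|_{L^2(\Omega_i)}\le k^{-1}\|e_i\|_{1,k,\Omega_i}$, the bounds \eqref{eq:pum-property}, and $k\delta_i\le kH\lesssim1$,
\[
\|\nabla(\chi_ie_i)\|_{L^2(\Omega_i)}\ \le\ C_{\op{G}}\delta_i^{-1}\|e_i\|_{L^2(\Omega_i)}+C_{\infty}\|\nabla e_i\|_{L^2(\Omega_i)}\ \lesssim\ (k\delta_i)^{-1}\|e_i\|_{1,k,\Omega_i}.
\]
Substituting this and $\|e_i\|_{L^2(\Omega_i)}\le k^{-1}\|e_i\|_{1,k,\Omega_i}$ into the two inequalities of Theorem~\ref{thm:pum-2}, and using $hk\lesssim1$ to absorb the $h^2\|\nabla(\chi_ie_i)\|^2$ contribution into $\|e_i\|_{1,k,\Omega_i}^2$, gives $\|v_h^{\partial}\|_V^2\lesssim(k\delta)^{-2}\sum_{i=1}^N\|e_i\|_{1,k,\Omega_i}^2$. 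Inserting (i) and invoking the finite-overlap assumption \eqref{eq:finite-overlap} (so that $\sum_i\|v_h\|_{1,k,\widetilde{\Omega}_i}^2\lesssim\|v_h\|_V^2$) gives $\|v_h^{\partial}\|_V\lesssim(k\delta)^{-1}(kH)^{-3/2}(kh)^{-1/2}\|v_h\|_V=C_{\partial}\sigma\|v_h\|_V$, as required.

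The hard part is (i): bounding the Schwarz defect $G_i$ so that precisely the collar inverse estimate $(kh)^{-1/2}$ and the correct negative power of $kH$ appear, while simultaneously exploiting the stability of $a_i$ on the small subdomain $\Omega_i$ --- which is available only because $kd_i\simeq kH\lesssim1$. Subdomains meeting $\partial\Omega$, where $R_i^Tw_i$ need not agree with $w_i$ on the physical boundary, need a little extra care but no new idea. Step (ii) is then routine bookkeeping with the weighted prolongations $\tilde{R}_i^T$, the gradient bound $\delta_i^{-1}$, and the finite-overlap constant $\Lambda$.
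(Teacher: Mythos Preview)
Your proposal is correct and follows essentially the same route as the paper: cite \cite[Lemma~4.4]{HuLi2024} for the local defect bound $\|Q_iv_h-v_h|_{\Omega_i}\|_{1,k,\Omega_i}\lesssim (kd_i)^{-3/2}(kh)^{-1/2}\|v_h\|_{1,k,\widetilde{\Omega}_i}$, combine it with Theorem~\ref{thm:pum-2} (which contributes the factor $(1+\delta_i^{-2}k^{-2})\lesssim(k\delta)^{-2}$), and close with the finite-overlap assumption. The only difference is that you additionally sketch the proof of the cited local estimate, whereas the paper simply invokes it as a black box.
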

\begin{proof}
By \cite[Lemma 4.4]{HuLi2024},  for each $i=1,\cdots,N$, there exists $\sigma_i:=(kd_i)^{-3/2}(kh)^{-1/2}$ such that
\begin{align}\label{eq:local-harmonic}
\left\|Q_i v_h-v_h|_{\Omega_i}\right\|_{V(\Omega_i)}\lesssim \sigma_i\|v_h\|_{V(\widetilde{\Omega}_{i})},
\end{align}
where $\widetilde{\Omega}_{i}$ satisfies $
\overline{\widetilde{\Omega}}_{i}:=\cup\{\overline{T}\in \mathcal{T}_h: \overline{T}\cap\Omega_i\neq \emptyset\}.$
This, Theorem \ref{thm:pum-2} and \eqref{eq:proj-edge-err} lead to 
\begin{align*}
\|v_h^{\partial}\|_V^2
&\lesssim(1+\delta_i^{-2}k^{-2})\sum_{i=1}^N \left\|Q_i v_h-v_h|_{\Omega_i}\right\|_{V(\Omega_i)}^2\lesssim\sigma^2
\sum_{i=1}^N \|v_h\|_{V(\widetilde{\Omega}_{i})}^2.
\end{align*}
Finally, the finite-overlap assumption \eqref{eq:finite-overlap} ensures that $\{\widetilde{\Omega}_i\}_{i=1}^N$ satisfies the same assumption. This and \eqref{eq:fine-mesh-cond} yield the desired assertion.
\end{proof}
\section{Coarse space}\label{sec:coarse-space}
To define the coarse space, we employ the local Helmholtz harmonic space $V_i^\partial$ with Dirichlet boundary data
\begin{align}\label{eq:local-space}
V_i^{\partial}=\{v_h\in V_{h,i}: a_i(v_h,w_h)=0\; \forall w_h\in V_{h,i}^0\}.
\end{align}
Here, $V_{h,i}^0\subset V_{h,i}$ denotes the space consisting of functions vanishing on the boundary $\partial\Omega_i$, namely, 
\[
V_{h,i}^0:=\{v_h\in V_{h,i}:\;v_h=0\text{ on }\partial\Omega\}.
\]
Utilizing the weighted prolongation $\tilde{R}_i^T: V_{h,i}\to V_h$ and the local spaces $V_i^{\partial}$, we define a global space
\begin{align}\label{eq:glo-edge-space}
V^{\partial}:=\mathrm{span}\left\{\sum_{i=1}^N \tilde{R}_i^T v_i: v_i\in V_i^{\partial}\right\}.
\end{align}
Below we construct the coarse space $V_{h,0}$ following \cite{fu2021wavelet}, which approximates the global space $V^{\partial}$ \eqref{eq:glo-edge-space} and also serves as a good ansatz space for the adjoint problem \eqref{eqn:weakformDual}. 
\subsection{Hierarchical subspace splitting over $I=:[0,1]$}\label{sec:wavelets}
First, we introduce the hierarchical bases on the unit interval $I:=[0,1]$, which facilitate hierarchically splitting the space $L^2(I)$ \cite{MR1162107}. Let $\ell\in \mathbb{N}$ be the level parameter and $h_{\ell}:=2^{-\ell}$ be the mesh size, respectively. The grid points on level $\ell$ are given by
\[
x_{\ell,j}=j\times h_{\ell},\quad 0\leq j\leq 2^{\ell}.
\]
We define the bases on level $\ell$ by
\begin{equation*}
\psi_{\ell,j}(x)=
\left\{
\begin{aligned}
&1-|x/h_{\ell}-j|, &&\text{ if }  x\in [(j-1)h_{\ell},(j+1)h_{\ell}]\cap [0,1],\\
&0, &&\text{ otherwise},
\end{aligned}
\right.
\end{equation*}
and the index set on each level $\ell$ by
\begin{equation*}
B_{\ell}:=\Bigg\{
j\in\mathbb{N}\Bigg|
\begin{aligned}
&j=1,\cdots,2^{\ell}-1, j \,\rm{ is\, odd }, &&\rm{if }\,\ell>0\\
&j=0,1,&&\rm{ if }\,\ell=0
\end{aligned}
\Bigg\}.
\end{equation*}
The subspace of level $\ell$ is
$W_{\ell}:=\text{span}\{\psi_{\ell,j}: j\in B_{\ell}\}$.
We denote by $V_{\ell}$ the subspace in $L^2(I)$ up to level $\ell$, which is the direct sum of subspaces $V_{\ell}:=\oplus_{m\leq\ell}W_{m}$.
This gives a hierarchical structure of the subspace $V_{\ell}$:
$V_{0}\subset V_{1}\subset \cdots\subset V_{\ell}\subset V_{\ell+1}\cdots$.
Furthermore, the following hierarchical decomposition of the space $L^2(I)$ holds
\[
L^2(I)=\lim_{\ell\to\infty}\oplus_{m\leq\ell}W_{m}.
\]
The hierarchical decomposition of the space $L^2(I^{d-1})$ for $d=2,3$ can be defined by means of tensor product, which is denoted as $V_{\ell}^{\otimes^{d-1}}$. We will use the subspace $V_{\ell}^{\otimes^{d-1}}$ to approximate the restriction of the exact solution $u$ on the coarse skeleton $\partial\mathcal{T}^H:=\cup_{T\in \mathcal{T}^H}\partial T$.

\begin{proposition}[Approximation properties of the hierarchical space $V_{\ell}$]\label{prop:approx-wavelets}
Let $d=2,3$, $s>0$ and let $\mathcal{I}_{\ell}: C(I^{d-1})\to V_{\ell}^{\otimes^{d-1}}$ be $L^2$-projection for each level $\ell\geq 0$, then there holds,
\begin{align}\label{prop:approx-wavelets}
\|v-\mathcal{I}_{\ell}v\|_{L^2(I^{d-1})}&\lesssim 2^{-s\ell}|v|_{H^s(I^{d-1})}\quad\text{for all }v\in H^s(I^{d-1}).
\end{align}
Here, $|\cdot|_{H^s(I^{d-1})}$ denotes the Gagliardo seminorm in the fractional Sobolev space (or Slobodeskii space) $H^s(I^{d-1})$, given by
\begin{align*}
|v|^2_{H^s(I^{d-1})}:=\int_{I^{d-1}}\int_{I^{d-1}}\frac{|v(x)-v(y)|^2}{|x-y|^{d-1+2s}}\mathrm{d}x\mathrm{d}y.
\end{align*}
The corresponding full norm is denoted as $\|\cdot\|_{H^s(I^{d-1})}$.
\end{proposition}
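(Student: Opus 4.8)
The plan is to establish the approximation estimate \eqref{prop:approx-wavelets} by a tensor-product reduction to the one-dimensional case, then invoke a standard Jackson-type (Bramble--Hilbert) estimate for the hierarchical (hat-function) spaces $V_\ell$ on $I=[0,1]$ together with an interpolation argument in the Sobolev scale. First I would treat the scalar case $d-1=1$: note that $V_\ell = \oplus_{m\le\ell} W_m$ is exactly the space of continuous piecewise linear functions on the uniform grid of mesh size $h_\ell = 2^{-\ell}$, and the $L^2$-orthogonal projection $\mathcal I_\ell$ onto $V_\ell$ is $L^2$-stable with the classical polynomial approximation property $\|v-\mathcal I_\ell v\|_{L^2(I)}\lesssim h_\ell^{\,t}|v|_{H^t(I)}$ for integer $t\in\{0,1,2\}$ (for $t=0$ this is trivial boundedness, for $t=1,2$ it follows from Bramble--Hilbert applied on each subinterval and summation, using that $\mathcal I_\ell$ reproduces $V_\ell\supset\mathcal P_1$). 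Then for non-integer $s\in(0,2)$ the estimate $\|v-\mathcal I_\ell v\|_{L^2(I)}\lesssim 2^{-s\ell}|v|_{H^s(I)}$ follows by real interpolation (the $K$-method) between the endpoint cases, since the operator $I-\mathcal I_\ell$ maps $H^0\to L^2$ and $H^2\to L^2$ (resp.\ $H^1\to L^2$) with the stated norms and the fractional seminorm $|\cdot|_{H^s}$ is, up to equivalent norms, an interpolation norm between $L^2$ and $H^1$ or $H^1$ and $H^2$.

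Next I would lift this to $d-1=2$ by the tensor-product structure $V_\ell^{\otimes 2} = V_\ell\otimes V_\ell$ and $\mathcal I_\ell = \mathcal I_\ell^{(x_1)}\otimes\mathcal I_\ell^{(x_2)}$. Writing $I - \mathcal I_\ell^{(1)}\otimes\mathcal I_\ell^{(2)} = (I-\mathcal I_\ell^{(1)})\otimes I + \mathcal I_\ell^{(1)}\otimes(I-\mathcal I_\ell^{(2)})$ and using the $L^2$-stability of each one-dimensional projection, one reduces the two-dimensional error to a sum of "partial" one-dimensional errors, each controlled by $2^{-s\ell}$ times a seminorm involving $s$ derivatives in one variable and $L^2$ control in the other. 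Summing and using the equivalence of $|v|_{H^s(I^2)}$ with the mixed expression $\big(\|\,|v(\cdot,x_2)|_{H^s_{x_1}}\|_{L^2_{x_2}}^2 + \|\,|v(x_1,\cdot)|_{H^s_{x_2}}\|_{L^2_{x_1}}^2\big)^{1/2}$ (a standard fact for the isotropic fractional Sobolev seminorm on a product domain, for $s\in(0,1)$ directly and for $s\in[1,2)$ combining integer derivatives with fractional remainders) yields the claimed bound. The same bookkeeping handles $d-1=1$ as the degenerate case.

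The main obstacle I anticipate is the bookkeeping at the interface between integer and fractional smoothness, i.e.\ making the interpolation argument clean for $s>1$ and reconciling the Gagliardo seminorm $|\cdot|_{H^s(I^{d-1})}$ used in the statement (which for $s\ge 1$ is not by itself an interpolation norm — one needs the full norm, or the decomposition $H^s = H^{\lfloor s\rfloor}\cap$ fractional part) with the tensor-product/interpolation machinery. For the purposes of this proof it suffices to interpret the estimate with the understanding that for $s\ge 1$ the relevant quantity on the right-hand side is the natural $H^s$ norm (the lower-order terms being absorbed since $2^{-s\ell}\le 2^{-\ell}$), and indeed in the applications later in the paper $s$ is taken in $(0,1]$ so only the genuinely fractional regime is needed. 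With that caveat the proof is a routine combination of Bramble--Hilbert on the uniform mesh, $L^2$-stability of $\mathcal I_\ell$, tensorization, and real interpolation, and I would present it in that order, relegating the one-dimensional polynomial estimates and the norm-equivalence on product domains to standard references.
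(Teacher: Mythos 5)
Your proposal is correct in outline but follows a genuinely different route from the paper. The paper never invokes operator interpolation for the fractional range: for $s\in(0,1)$ it works directly on the reference interval, using that the $L^2$-projection is stable and reproduces constants (quasi-optimality against $w_0\in V_0$), then the fractional Poincar\'e inequality $\min_{c}\|v-c\|_{L^2(I)}\lesssim |v|_{H^s(I)}$, and finally a scaling argument over the level-$\ell$ subintervals to produce the factor $2^{-s\ell}$; the case $s\ge 1$ is dispatched by citing standard finite element approximation theory plus interpolation, and $d=3$ is said to follow ``in a similar manner.'' You instead prove integer-order Jackson estimates and pass to fractional $s$ by the $K$-method, then tensorize explicitly via $(I-\mathcal{I}_{\ell}^{(1)}\otimes\mathcal{I}_{\ell}^{(2)})=(I-\mathcal{I}_{\ell}^{(1)})\otimes I+\mathcal{I}_{\ell}^{(1)}\otimes(I-\mathcal{I}_{\ell}^{(2)})$ and the directional characterization of $|\cdot|_{H^s(I^2)}$ --- this is in fact more detailed than the paper on the multi-dimensional step, and your caveat about the Gagliardo seminorm being the wrong object for $s\ge 1$ is a legitimate point the paper simply sidesteps. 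One imprecision to fix: the interpolation space $(L^2,H^1)_{s,2}$ carries the \emph{full} $H^s$ norm, not the seminorm, so to land on the right-hand side $|v|_{H^s}$ you must either interpolate modulo constants or exploit that $I-\mathcal{I}_{\ell}$ annihilates constants and then apply the fractional Poincar\'e inequality to remove the lower-order term --- precisely the ingredient the paper uses directly. What your route buys is a uniform treatment of all $s\in(0,2)$ and an explicit $d=3$ argument; what the paper's route buys is a shorter, purely local argument for $s<1$ that yields the seminorm bound immediately without interpolation theory.
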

\begin{proof}
If $s\geq 1$, then the assertion is standard, see, e.g. \cite{MR520174}, together with the interpolation method. Therefore, we only focus on the case with $s<1$.

Let $d=2$. A straightforward calculation leads to 
\begin{align}\label{eq:111222}
\|v-\mathcal{I}_{\ell}v\|_{L^2(I)}^2&=\sum_{j=0}^{2^{\ell}-1}\int_{x_{\ell,j}}^{x_{\ell,j+1}}|v-\mathcal{I}_{\ell}v|^2\mathrm{d}x.
\end{align}
Note that by definition of $\mathcal{I}_{0}$ on the reference interval $I$, $\mathcal{I}_0: H^s(I)\to V_0$ is stable, 
\begin{align*}
\|\mathcal{I}_{0}v\|_{L^2(I)}\leq \|v\|_{L^2(I)}.
\end{align*}
Consequently, for any $w_{0}\in V_{0}$, there hold
\begin{align*}
\|v-\mathcal{I}_{0}v\|_{L^2(I)}&=
\left\|(v-w_{0})-\mathcal{I}_{0}(v-w_{0})\right\|_{L^2(I)}\\
&\leq \left\|v-w_{0}\right\|_{L^2(I)}.
\end{align*}
By the poincar\'{e} inequality in the fractional Sobolev space with $s\in (0,1)$, see, e.g., \cite{MR3095149}, we obtain 
\begin{align*}
\min\limits_{c\in\mathbb{R}}\left\|v-c\right\|_{L^2(I)}
\lesssim \left|v\right|_{H^s(I)}.
\end{align*}
Combining the last two inequalities, and together with the fact that constant belongs to $V_0$, we obtain 
\begin{align*}
\|v-\mathcal{I}_{0}v\|_{L^2(I)}\lesssim \left|v\right|_{H^s(I)}.
\end{align*}
This, combining with a scaling argument and \eqref{eq:111222}, leads to the desired result for $d=2$. The proof for $d=3$ can be proceeded in a similar manner. 
\end{proof}

\begin{figure}[hbt!]
	\centering
	\def\PointRadius{0.05}
	\tikzset{
		cross/.pic = {
			\draw[rotate = 45] (-#1,0) -- (#1,0);
			\draw[rotate = 45] (0,-#1) -- (0, #1);
		}
	}
	\begin{tikzpicture}[scale=1]
		\draw[gray] (0, 3) -- ++(4, 0);
		\draw[gray] (0, 3) -- ++(4, 2);
		\draw[gray] (4, 3) -- ++(-4, 2);
		\fill[black]  (0, 3) circle (\PointRadius);
		\fill[black]  (4, 3) circle (\PointRadius);
		\node[below] at (0, 3) {\large$ x_{0, 0}$};
		\node[below] at (4, 3) {\large$ x_{0, 1}$};
		
		\draw[gray] (0, 0) -- ++(4, 0);
		\draw[gray] (0, 0) -- ++(2, 2);
		\draw[gray] (4, 0) -- ++(-2, 2);
		\fill[black]  (0, 0) circle (\PointRadius);
		\fill[black]  (4, 0) circle (\PointRadius);
		\path (2,.0) pic[black] {cross=\PointRadius};
		\node[below] at (0, 0) {\large$x_{1, 0}$};
		\node[below] at (2, 0) {\large$x_{1, 1}$};
		\node[below] at (4, 0) {\large$x_{1, 2}$};
		
		\draw[gray] (0, -3) -- ++(4, 0);
		\draw[gray] (0, -3) -- ++(1, 2);
		\draw[gray] (2, -3) -- ++(-1, 2);
		\draw[gray] (2, -3) -- ++(1, 2);
		\draw[gray] (4, -3) -- ++(-1, 2);
		\fill[black]  (0, -3) circle (\PointRadius);
		\fill[black]  (4, -3) circle (\PointRadius);
		\path (2,.-3) pic[black] {cross=\PointRadius};
		\fill[black] (1, -3) circle ({0.5 * \PointRadius});
		\fill[black] (3, -3) circle ({0.5 * \PointRadius});
		\node[below] at (0, -3) {\large$x_{2, 0}$};
		\node[below] at (2, -3) {\large$x_{2, 2}$};
		\node[below] at (4, -3) {\large$x_{2, 4}$};
		\node[below] at (1, -3) {\large$x_{2, 1}$};
		\node[below] at (3, -3) {\large$x_{2, 3}$};
		
		\node at (5, 3+1) {\Large$W_0$};
		\node at (5,  0+1) {\Large$W_1$};
		\node at (5, -3+1) {\Large$W_2$};
		\node at (5, 1.5+1) {\Large$\oplus$};
		\node at (5, -1.5+1) {\Large$\oplus$};

		\draw[gray] (7, 0) -- ++(4, 0);
		\draw[gray] (7, 0) -- ++(2, 2);
		\draw[gray] (9, 0) -- ++(-2, 2);
		\draw[gray] (9, 2) -- ++(2, -2);
		\draw[gray] (7+2, 0) -- ++(2, 2);		
		\fill[black]  (5+2, 0) circle (\PointRadius);
		\fill[black]  (7+2, 0) circle (\PointRadius);
		\fill[black]  (9+2, 0) circle (\PointRadius);
		\node[below] at (5+2, 0) {\large$x_{1, 0}$};
		\node[below] at (7+2, 0) {\large$x_{1, 1}$};
		\node[below] at (9+2, 0) {\large$x_{1, 2}$};
		
		\draw[gray] (0+7, -3) -- ++(4, 0);
		\draw[gray] (0+7, -3) -- ++(1, 2);
		\draw[gray] (8, -3) -- ++(-1, 2);
		\draw[gray] (10, -3) -- ++(-1, 2);
		\draw[gray] (8, -3) -- ++(1, 2);
		\draw[gray] (10, -3) -- ++(1, 2);
		\draw[gray] (2+7, -3) -- ++(-1, 2);
		\draw[gray] (2+7, -3) -- ++(1, 2);
		\draw[gray] (4+7, -3) -- ++(-1, 2);
		\fill[black]  (0+5+2, -3) circle (\PointRadius);
		\fill[black]  (2+5+2, -3) circle (\PointRadius);
		\fill[black]  (6+2, -3) circle (\PointRadius);	
		\fill[black] (10, -3) circle ({1 * \PointRadius});
		\fill[black] (11, -3) circle ({1 * \PointRadius});
		\node[below] at (7, -3) {\large$x_{2, 0}$};
		\node[below] at (1+5+2, -3) {\large$x_{2, 1}$};
		\node[below] at (2+5+2, -3) {\large$x_{2, 2}$};
		\node[below] at (10, -3) {\large$x_{2, 3}$};
		\node[below] at (11, -3) {\large$x_{2, 4}$};

		\node at (12,  0+1) {\Large$V_1$};
		\node at (12, -3+1) {\Large$V_2$};
		
	\end{tikzpicture}
\caption{An illustration of hierarchical bases for $\ell=0,1,2$.}
\end{figure}

\subsection{Coarse space}
Now we present the construction of the coarse space $V_{h,0}$. The key is to
find local multiscale bases in each coarse subdomain $\Omega_i$, having certain approximation properties to $V_i^{\partial}$. The global multiscale bases functions, obtained from the local ones by the weighted prolongation $\tilde{\mathcal{R}}_i^T$, inherit these approximation properties.

First we define the linear space over the boundary of each subdomain $\partial\Omega_i$. Fix the level parameter $\ell\in \mathbb{N}$, and let $\Gamma_{i}^j$ with $j=1,2,3,4$ be a partition of $\partial\Omega_i$ with no mutual intersection, i.e., $\cup_{j=1}^{4}\overline{\Gamma_{i}^j}=\partial\Omega_i$ and $\Gamma_{i}^j\cap \Gamma_{i}^{j'}=\emptyset$ if $j\neq j'$. Furthermore,
we denote by $V_{i,\ell}^j\subset C(\partial\Omega_{i})$ the linear space spanned by hierarchical bases up to level $\ell$ on each coarse edge $\Gamma_{i}^{j}$ and continuous over $\partial\Omega_i$. The local edge space $V_{i,\ell}$ defined over $\partial\Omega_i$ is the smallest linear space having $V_{i,\ell}^j$ as a subspace. Let $\{\psi_{i,\ell}^j\}_{j=1}^{2^{\ell+2}}$ and 
$\{x_{i,\ell}^j\}_{j=1}^{2^{\ell+2}}$ be the nodal bases and the associated nodal points for $V_{i,\ell}$. Then we can represent the local edge space $V_{i,\ell}$ by  
\begin{align}\label{eq:local-edge}
V_{i,\ell} := \text{span} \big\{\psi_{i,\ell}^j : 1 \leq j \leq  2^{\ell+2}\big\}.
\end{align}
We depict in Figure \ref{fig:grid-points-local} the grid points $\{x_{i,\ell}^j\}_{j=1}^{2^{\ell+2}}$ of the hierarchical bases over $\partial\Omega_i$ for level parameter $\ell\in\{0,1,2\}$. 

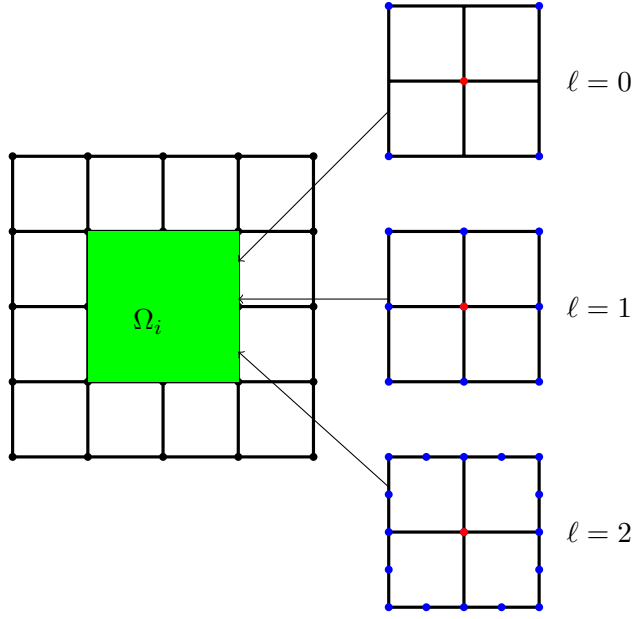
\begin{figure}[hbt!]
	\centering
	\begin{tikzpicture}[scale=1]
		\draw[step=1.0, black, very thick] (-0, -0) grid (4, 4);
		\foreach \x in {0,...,4}
		\foreach \y in {0,...,4}{
			\fill (1.0 * \x, 1.0 * \y) circle (1.5pt);
		}
		\fill [red] (2.0 , 2.0 ) circle (1.5pt);
		\fill[green, opacity=0.4] (1.0, 1.0) rectangle (3.0, 3.0);
		\node at (1.8, 1.8) {$\Omega_i$};
		
		\draw[step=1.0, black, very thick] (5,4) grid (7, 6);
		\foreach \x in {5,7}
		\foreach \y in {4,6}{
			\fill [blue] (1.0 * \x, 1.0 * \y) circle (1.5pt);
		}
		
		\draw[step=1.0, black, very thick] (5,1) grid (7, 3);
		
		\foreach \x in {5,6,7}
		\foreach \y in {1,2,3}{
			\fill [blue] (1.0 * \x, 1.0 * \y) circle (1.5pt);
		}

		\draw[step=1.0, black, very thick] (5,-2) grid (7, 0);
		
		\foreach \x in {5,6,7}
		\foreach \y in {-2,-1,0}{
			\fill [blue] (1.0 * \x, 1.0 * \y) circle (1.5pt);	
		}
		
		\foreach \x in {5.5,6.5}
		\foreach \y in {-2,0}{
			\fill [blue] (1.0 * \x, 1.0 * \y) circle (1.5pt);		
		}
		
		\foreach \x in {5,7}
		\foreach \y in {-1.5,-0.5}{
			\fill [blue] (1.0 * \x, 1.0 * \y) circle (1.5pt);
		}
		
		\fill [red] (6,-1) circle (1.5pt);
		\fill [red] (6,2) circle (1.5pt);
		\fill [red] (6,5) circle (1.5pt);
		
		\draw [-to](5,4.6) -- (3.0,2.6);
		\draw [-to](5,2.1) -- (3.0,2.1);
		\draw [-to](5,-0.4) -- (3.0,1.4);
		
		\node at (7.8, -1) {$\ell=2$};
		\node at (7.8, 2) {$\ell=1$};
		\node at (7.8, 5) {$\ell=0$};
	\end{tikzpicture}
	\caption{Grid points for $\ell=0,1,2$ over $\partial\Omega_i$.}
\label{fig:grid-points-local}
\end{figure}

Next, we define the local multiscale space over each coarse neighborhood $\Omega_i$, defined by 
\begin{align}\label{eq:local-multiscale}
\mathcal{L}^{-1}_i(V_{i,\ell})
:= \text{span} \left\{\mathcal{L}^{-1}_i(\psi_{i,\ell}^j),v^i : 1 \leq j \leq  2^{\ell+2}\right\},
\end{align}
where $\mathcal{L}^{-1}_i (\psi_{i,\ell}^j):=v\in H^1(\Omega_i)$ is the solution to the following local problem
\begin{equation}
\label{eq:Li}
  \left\{ \begin{aligned}
          \mathcal{L}_i v&:=\Delta v+k^2 v=0, &&\mbox{in }\Omega_i,\\
          v&=\psi_{i,\ell}^j, &&\mbox{on }\partial\Omega_i.
  \end{aligned}\right.
\end{equation}
Then we compute one local solution $v^i$ defined by the solution to the local problem
\begin{equation}\label{eq:Li-local}
  \left\{ \begin{aligned}
          \mathcal{L}_i v^i&:=\Delta v^i+k^2 v^i=1, &&\mbox{in }\Omega_i,\\
          v^i&=0, &&\mbox{on }\partial\Omega_i.
  \end{aligned}\right.
\end{equation}
By the construction, the dimension of $\mathcal{L}^{-1}_i (V_{i,\ell})$ is $2^{\ell+2}+1$. In practice, $2^{\ell+2}+1$ local problems are solved by the FEM to obtain the local multiscale space $\mathcal{L}^{-1}_i(V_{i,\ell})$, which can be carried out in parallel and thus has a low computational complexity. Note that the local problems \eqref{eq:Li} and \eqref{eq:Li-local} may admit more than one solution when $k$ is the eigenvalue of the corresponding Dirichlet Laplacian. Nonethelss, one can make it unique by taking the local solution with the minimum $V(\Omega_i)$-norm. 

Finally, the edge multiscale ansatz space $V_{h,0}$ is defined by the weighted prolongation $\tilde{\mathcal{R}}_i^T$
\begin{align}\label{eq:global-multiscale}
V_{h,0} := \text{span} \left\{\tilde{\mathcal{R}}_i^T\mathcal{L}^{-1}_i(\psi_{i,\ell}^j), \tilde{\mathcal{R}}_i^Tv^i: \,  \, 1 \leq i \leq N \text{ and } 1 \leq j \leq  2^{\ell+2}\right\}.
\end{align}
The construction of $V_{h,0}$ is summarized in Algorithm \ref{algorithm:wavelet}. 

\begin{algorithm}[H]
\caption{The construction of the edge multiscale space $V_{h,0}$.}
\label{algorithm:wavelet}
    \KwData{The level parameter $\ell\in \mathbb{N}$; coarse neighborhood $\Omega_i$ and its four coarse edges $\Gamma_{i}^{j}$ with
    $j=1,2,3,4$; the subspace $V_{\ell,i}^k\subset L^2(\Gamma_{i}^{k})$ up to level $\ell$ on each coarse edge $\Gamma_{i}^{k}$.
    }
    \KwResult{$V_{h,0}$}
Construct the local edge space $V_{i,\ell}$ \eqref{eq:local-edge}\;    
Compute the local multiscale space $\mathcal{L}^{-1}_i (V_{i,\ell})$ \eqref{eq:local-multiscale}\;
Compute one specific local multiscale basis function $v_i$ from \eqref{eq:Li-local}\;
Construct the coarse space $V_{h,0}$ \eqref{eq:global-multiscale}.
\end{algorithm}

Next we give the coarse projection operator $\mathcal{P}_0$ of level $\ell$: $V_h\to V_{h,0}$. Since $V_{h,0}$ is generated by the local multiscale space $\mathcal{L}^{-1}_i(V_{i,\ell})$ by means of the weighted prolongation $\tilde{\mathcal{R}}_i^T$, we only need to define local projection operator $\mathcal{P}_{i,\ell}$ at level $\ell$: $L^2(\partial\Omega_i)\to \mathcal{L}^{-1}(V_{i,\ell})$. Then $\mathcal{P}_{i,\ell}: L^2(\partial\Omega_i)\to \mathcal{L}_i^{-1}(V_{i,\ell})$ can be defined by 
\begin{align}\label{eq:projectionEDGE}
\mathcal{P}_{i,\ell}v:=\mathcal{L}_{i}^{-1}(\mathcal{I}_{i,\ell}(v|_{\partial\Omega_i})).
\end{align}
Here, $\mathcal{I}_{i,\ell}:L^2(\partial\Omega_i)\to V_{i,\ell}$ denotes the $L^2$-projection. 

Note that $\mathcal{P}_{i,\ell}v|_{\partial\Omega_i}$ is the $L^2$-projection on $V_{i,\ell}$:
\begin{align*}
\mathcal{P}_{i,\ell}v|_{\partial\Omega_i}
=\mathcal{I}_{i,\ell}(v|_{\partial\Omega_i}).
\end{align*}
Since any $v\in V_h$ can be represented by
$v=\sum_{i=1}^N \tilde{{R}}_i^T v|_{\Omega_i}$,
the coarse projection $\mathcal{P}_0$ of level $\ell$: $ V_h\to V_0$ can be defined in terms of the local projection \eqref{eq:projectionEDGE},
\begin{align}\label{eq:glo-proj}
\mathcal{P}_0(v):=\sum_{i=1}^N\tilde{{R}}_i^T \left(\mathcal{P}_{i,\ell}v\right).
\end{align}

\section{Convergence analysis}\label{sec:converge}
Now we prove that the operator $I-B_{\op{hybrid}}A$ is contractive under very mild conditions on the coarse mesh size $H$ and level parameter $\ell$ in \eqref{eq:wavelet-level-H} and \eqref{eq:wavelet-level-ell}. This is achieved as follows. We first establish the approximation property of the edge multiscale ansatz space $V_{h,0}$ to the target Helmholtz harmonic linear space $V^{\partial}$ and the solution to the adjoint problem \eqref{eqn:weakformDual} in Lemma \ref{thm:last}. Then we establish that $\|I-R_0^TQ_0\|$ is bounded  by 1/2 using the classical Schatz argument which combines with G\aa rding's inequality \eqref{eq:garding} and the approximation property of $V_{h,0}$ to the solution of the adjoint problem \eqref{eqn:weakformDual}. 

\begin{lemma}[Approximation properties of the projection $\mathcal{P}_{i,\ell}$] \label{thm:proj}
Let Assumption \ref{ass:resolution} hold. For all $v_i^{\partial}\in V^{\partial}_i$, let $e_i\in V_{h,i}$ satisfy
\begin{equation}\label{eq:pde-approx}
\left\{
\begin{aligned}
\mathcal{L}_i e_i&:=\Delta e_i+k^2 e_i=0,&&\mbox{in }\Omega_i,\\
 e_i&=v_{i}^{\partial}-\mathcal{P}_{i,\ell}(v_{i}^{\partial}),&&\mbox{on }\partial\Omega_i.
\end{aligned}
\right.
\end{equation}
Then there holds 
\begin{align*}
\normL{e_i}{\Omega_i}&\leq\Cw \Ce2^{-\ell/2}d_i\| v_{i}^{\partial}\|_{H^1( \Omega_i)},\\
\normL{\nabla(\chi_i\locv{e}{}{i})}{\Omega_i}
&\leq\Cw\Ce 2^{-\ell/2}\delta^{-1} d_i\| v_{i}^{\partial}\|_{H^1( \Omega_i)}.
\end{align*}
\end{lemma}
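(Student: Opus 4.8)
The plan is to control $e_i$, the Helmholtz-harmonic extension of the edge error $v_i^\partial - \mathcal{P}_{i,\ell}(v_i^\partial)$, by first estimating the boundary data in a negative or fractional Sobolev norm on $\partial\Omega_i$ using Proposition \ref{prop:approx-wavelets}, and then transferring that estimate to the interior via the stability of the Helmholtz-harmonic extension operator $\mathcal{L}_i^{-1}$ together with Assumption \ref{ass:resolution}. The key observation is that on each coarse edge $\Gamma_i^j$ the trace of $\mathcal{P}_{i,\ell}$ acts as the $L^2$-projection $\mathcal{I}_{i,\ell}$ onto the hierarchical space $V_{i,\ell}$, so by Proposition \ref{prop:approx-wavelets} (with $s$ chosen in $(0,1)$, e.g. $s=1/2$) one gets $\|v_i^\partial - \mathcal{P}_{i,\ell}(v_i^\partial)\|_{L^2(\partial\Omega_i)} \lesssim 2^{-s\ell} d_i^{s}\, |v_i^\partial|_{H^{s}(\partial\Omega_i)}$ after the appropriate scaling from the reference edge of length $1$ to one of length $\simeq d_i$. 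A trace inequality then bounds $|v_i^\partial|_{H^{1/2}(\partial\Omega_i)} \lesssim d_i^{-1/2}\|v_i^\partial\|_{H^1(\Omega_i)}$ (with the $d_i$-power coming from scaling and the $k$-weighting absorbed into $\|\cdot\|_{H^1(\Omega_i)}$, or into $V(\Omega_i)$), which yields the factor $2^{-\ell/2} d_i$ in the first assertion once the interior $L^2$-bound on $e_i$ is in hand.

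Next I would establish the stability estimate for the Helmholtz-harmonic extension: if $e_i = \mathcal{L}_i^{-1}(\phi)$ with $\phi$ the edge error, then $\|e_i\|_{L^2(\Omega_i)} \lesssim \Ce\, d_i \,\|\phi\|_{H^{1/2}(\partial\Omega_i)}$ (or more naturally $\lesssim \Ce\,\|e_i\|_{V(\Omega_i)}$ first, then reduce). The point is that $e_i$ solves a homogeneous Helmholtz equation, so testing the weak form with $e_i$ itself and using Gårding's inequality \eqref{eq:garding} at the subdomain level gives $\|e_i\|_{V(\Omega_i)}^2 \le \mathrm{Re}\,a_i(e_i,e_i) + 2k^2\|e_i\|_{L^2(\Omega_i)}^2$; the harmonicity kills the volume part of $a_i$ against any $V_{h,i}^0$ function, and one handles the $2k^2\|e_i\|_{L^2}^2$ term by a Poincaré/Friedrichs-type argument relative to a fixed (non-Helmholtz-harmonic, e.g. discrete-harmonic or arbitrary) extension of $\phi$: write $e_i = \tilde e_i + e_i^0$ with $\tilde e_i$ an arbitrary $V_{h,i}$ extension of $\phi$ and $e_i^0 \in V_{h,i}^0$, minimize over $\tilde e_i$, and use $\|e_i^0\|_{L^2(\Omega_i)}^2 \le \Cpoin{\Omega_i} H^2 \|\nabla e_i^0\|_{L^2(\Omega_i)}^2$, which is exactly where $\Ce = (1 - \max_i \Cpoin{\Omega_i}(Hk)^2)^{-1}$ enters to absorb the indefinite term. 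This is the step I expect to be the main obstacle — getting the constant to come out as $\Ce$ cleanly and tracking all $d_i$ (equivalently $H$) powers through the scaling of the trace and extension inequalities; one must be careful that the $k$-weighted $H^1$-norm, not the plain one, is what naturally appears, and then use $\|v_i^\partial\|_{H^1(\Omega_i)}$ in the statement as an upper bound (or note $H k \lesssim 1$ makes the two comparable up to the stated constants).

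Finally, for the second assertion I would combine the interior $L^2$-bound on $e_i$ just obtained with the gradient bound. Writing $\nabla(\chi_i e_i) = (\nabla \chi_i) e_i + \chi_i \nabla e_i$, the first term is controlled by $\|\nabla\chi_i\|_{L^\infty} \|e_i\|_{L^2(\Omega_i)} \le C_{\mathrm{G}}\delta^{-1}\|e_i\|_{L^2(\Omega_i)}$ using \eqref{eq:pum-property}, giving the $\delta^{-1}$ factor, and $\|e_i\|_{L^2(\Omega_i)}$ is already bounded by $\Cw\Ce 2^{-\ell/2} d_i \|v_i^\partial\|_{H^1(\Omega_i)}$. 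For the second term $\|\chi_i \nabla e_i\|_{L^2(\Omega_i)} \le C_\infty \|\nabla e_i\|_{L^2(\Omega_i)}$, and $\|\nabla e_i\|_{L^2(\Omega_i)} \le \|e_i\|_{V(\Omega_i)}$ is bounded — again via the harmonic-extension stability — by $\Cw\Ce 2^{-\ell/2} d_i \,\delta^{-1}$ times $\|v_i^\partial\|_{H^1(\Omega_i)}$; here one needs $\delta \lesssim d_i \simeq H$ so that the $\delta^{-1}$ term dominates the $d_i$-scaled $H^1$-to-$H^{1/2}$ trace estimate, or one simply notes the gradient of $e_i$ scales like $d_i^{-1}$ relative to its $L^2$ norm so that $\|\nabla e_i\|_{L^2} \lesssim \Cw\Ce 2^{-\ell/2}\|v_i^\partial\|_{H^1(\Omega_i)}$ and the $\delta^{-1}$ beats $d_i^{-1}$. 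Collecting terms and relabelling the constant as $\Cw$ (which is allowed to absorb $C_\infty, C_{\mathrm{G}}$, the trace constant, the scaling constant, and any fixed geometric constant) gives both displayed inequalities.
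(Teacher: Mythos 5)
Your outline (apply Proposition \ref{prop:approx-wavelets} with $s=1/2$ on the boundary, then transfer to the interior) matches the paper's strategy at the top level, but the central analytic step is missing, and the step you propose in its place would not deliver the stated bounds. The factor $2^{-\ell/2}$ is only available when the projection error is measured in $L^2(\partial\Omega_i)$: the trace of $v_i^{\partial}$ is merely in $H^{1/2}(\partial\Omega_i)$, so the $L^2$-projection error measured in $H^{1/2}(\partial\Omega_i)$ has no decay in $\ell$. Hence you must bound $\normL{e_i}{\Omega_i}$ by the $L^2(\partial\Omega_i)$-norm of the Dirichlet data in \eqref{eq:pde-approx}. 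Your proposed route --- testing the weak form with $e_i$, G\aa rding's inequality, and a Poincar\'e argument with an arbitrary $H^1$ lifting of the data --- only yields an energy estimate of $e_i$ in terms of the $H^{1/2}(\partial\Omega_i)$-norm of the data (indeed, the Helmholtz-harmonic extension of data that is only in $L^2(\partial\Omega_i)$ need not lie in $H^1(\Omega_i)$ with uniform bounds), so the $2^{-\ell/2}$ factor is lost; the obstacle is not bookkeeping of $\Ce$ and $d_i$-powers, but a mismatch of norms. The paper gets the required very weak a priori estimate, of the form $\normL{e_i}{\Omega_i}\lesssim \Ce\, d_i^{1/2}\,\|e_i\|_{L^2(\partial\Omega_i)}$, by the transposition (Lions--Magenes duality) method \cite{MR0350177}, which is exactly what its one-line proof points to via \cite[equation (4.5) and Appendix A]{fu2021wavelet}; Assumption \ref{ass:resolution} enters there to absorb the $k^2$ term and produce $\Ce$.

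The second estimate has a related gap. After splitting $\nabla(\chi_i e_i)=(\nabla\chi_i)e_i+\chi_i\nabla e_i$, you bound the second term by $C_\infty\|\nabla e_i\|_{L^2(\Omega_i)}$, but the full gradient of $e_i$ over $\Omega_i$ does not admit a bound carrying $2^{-\ell/2}$ (same reason as above), and the heuristic that ``the gradient of $e_i$ scales like $d_i^{-1}$ relative to its $L^2$-norm'' is not a usable inequality ($e_i\in V_{h,i}$ only gives an inverse estimate with $h^{-1}$, and interior regularity gives gradient control only away from $\partial\Omega_i$). The cutoff is essential precisely here: $\chi_i$ vanishes in a collar of width $\simeq\delta$ along $\partial\Omega_i\setminus\partial\Omega$, so a Caccioppoli-type interior estimate --- test $\Delta e_i+k^2e_i=0$ against $\chi_i^2\overline{e}_i$ (or its interpolant in the discrete setting) --- gives $\|\chi_i\nabla e_i\|_{L^2(\Omega_i)}\lesssim(\delta^{-1}+k)\normL{e_i}{\Omega_i}\lesssim\delta^{-1}\normL{e_i}{\Omega_i}$ under Assumption \ref{ass:resolution} and \eqref{eq:c-diameter}; combined with the first estimate this yields the second bound, and it is also where the $\delta^{-1}$ factor genuinely originates, rather than from the crude bound $\|\chi_i\nabla e_i\|_{L^2(\Omega_i)}\le C_\infty\|\nabla e_i\|_{L^2(\Omega_i)}$. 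Your treatment of the term $(\nabla\chi_i)e_i$ is correct.
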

\begin{proof}
The proof is a slight modification of that for \cite[equation (4.5)]{fu2021wavelet} by changing the minimum overlapping size from $H$ to $\delta$, and using Propostion \ref{prop:approx-wavelets}. 
\end{proof}

Next, we derive approximation properties of the edge multiscale ansatz space $V_{h,0}$ to the target Helmholtz harmonic linear space $V^{\partial}$ and the solution to the adjoint problem \eqref{eqn:weakformDual}. Similar to \cite[Theorem 4.2]{fu2021wavelet}, the proof is based on the global and local splittings, but utilizing a weighted prolongation $\tilde{R}^T_i$ instead of the partition of the unity functions $\chi_i$. 
\begin{lemma}[Approximation properties of the edge multiscale ansatz space $V_{h,0}$] \label{thm:last}
Let \eqref{eq:c-diameter} and Assumption \ref{ass:resolution} hold. 
For any $v^{\partial}\in V^{\partial}$ defined in \eqref{eq:glo-edge-space}, there exists $\Const{\rm ap}$  independent of $(h,k,H)$ such that
\begin{equation}\label{eq:waveletErrconv1}
\|v^{\partial}-\mathcal{P}_0 v^{\partial}\|_V
\leq \Const{\rm ap} \Ce\delta^{-1} H2^{-\ell/2}\|{v^{\partial}}\|_{V}.
\end{equation} 
For all $w_h\in V_h$, let $z_h\in V_h$ be the solution to the adjoint problem \eqref{eqn:weakformDual}. Then there holds
\begin{equation}\label{eq:waveletErrconvDual}
\inf\limits_{v_0\in V_{h,0}}\|z_h-v_0\|_V\leq
\Const{\ref{thm:last}}\delta^{-1}{H}
(H+\Ce^2 2^{-\ell/2})\|{w_h}\|_{L^2(\Omega)}.
\end{equation}

\end{lemma}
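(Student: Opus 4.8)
The plan is to prove both estimates by the ``global splitting realised through local splittings'' device of \cite[Theorem 4.2]{fu2021wavelet}, gluing the local pieces with the weighted prolongation $\tilde{R}_i^T$ rather than the partition of unity, and feeding in the minimal‑overlap local bounds of Lemma \ref{thm:proj} together with the local‑to‑global passage of Theorem \ref{thm:pum-2}; for \eqref{eq:waveletErrconvDual} one additionally invokes Helmholtz regularity for the adjoint problem \eqref{eqn:weakformDual}.

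First I would prove \eqref{eq:waveletErrconv1}. Write $v^{\partial}=\sum_{i=1}^N\tilde{R}_i^T v_i$ with $v_i\in V_i^{\partial}$; since $\mathcal{P}_0$ acts subdomainwise only through the Dirichlet trace on $\partial\Omega_i$, one has $v^{\partial}-\mathcal{P}_0 v^{\partial}=\sum_{i=1}^N\tilde{R}_i^T e_i$ with $e_i:=v_i-\mathcal{P}_{i,\ell}v_i$. As $v_i$ and $\mathcal{P}_{i,\ell}v_i=\mathcal{L}_i^{-1}(\mathcal{I}_{i,\ell}(v_i|_{\partial\Omega_i}))$ are both Helmholtz harmonic in $\Omega_i$, the function $e_i$ is exactly the solution of \eqref{eq:pde-approx} with $v_i^{\partial}=v_i$, so Lemma \ref{thm:proj} gives $\normL{e_i}{\Omega_i}\lesssim\Cw\Ce\,2^{-\ell/2}d_i\norm{v_i}_{1,k,\Omega_i}$ and $\normL{\nabla(\chi_i e_i)}{\Omega_i}\lesssim\Cw\Ce\,2^{-\ell/2}\delta^{-1}d_i\norm{v_i}_{1,k,\Omega_i}$. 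Inserting these into Theorem \ref{thm:pum-2}, using $d_i\approx H$ from \eqref{eq:c-diameter}, $h\le H$, and Assumption \ref{ass:resolution} to keep the $k^2$‑term controlled by the gradient energy, one obtains $\norm{v^{\partial}-\mathcal{P}_0 v^{\partial}}_V\lesssim\Ce\,2^{-\ell/2}\delta^{-1}H\big(\sum_{i=1}^N\norm{v_i}_{1,k,\Omega_i}^2\big)^{1/2}$. What then remains is the stability of the local Helmholtz‑harmonic splitting, $\sum_{i=1}^N\norm{v_i}_{1,k,\Omega_i}^2\lesssim\norm{v^{\partial}}_V^2$, proved by bounding each local component by $v^{\partial}$ on a slightly enlarged patch and using the finite‑overlap assumption \eqref{eq:finite-overlap}.

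Next I would prove \eqref{eq:waveletErrconvDual}. Decompose the adjoint solution over the subdomains, $z_h|_{\Omega_i}=b_i+h_i$, with $h_i\in V_i^{\partial}$ carrying the Dirichlet trace of $z_h$ and $b_i\in V_{h,i}^0$ the local (adjoint) bubble, so that $z_h=\sum_{i=1}^N\tilde{R}_i^T b_i+\sum_{i=1}^N\tilde{R}_i^T h_i$, the second sum lying in $V^{\partial}$. The bubble $b_i$ solves a local problem with source bounded by $w_h|_{\Omega_i}$; testing against $b_i$ and using the Poincar\'e/Friedrichs inequality on $\Omega_i$ together with Assumption \ref{ass:resolution} to absorb the indefinite $k^2$‑term yields $\norm{b_i}_{1,k,\Omega_i}\lesssim\Ce H\normL{w_h}{\Omega_i}$ and $\normL{b_i}{\Omega_i}\lesssim\Ce H^2\normL{w_h}{\Omega_i}$. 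Then Theorem \ref{thm:pum-2} turns the bubble sum into a term of order $\delta^{-1}H^2\normL{w_h}{\Omega}$, the $\delta^{-1}$ coming from $\nabla\chi_i$ acting on $b_i$; for the harmonic sum one applies the local‑component refinement of \eqref{eq:waveletErrconv1} together with $\sum_{i=1}^N\norm{h_i}_{1,k,\Omega_i}^2\lesssim\Ce^2\normL{w_h}{\Omega}^2$, which follows from the $k$‑uniform a priori bound $\norm{z_h}_V\lesssim\normL{w_h}{\Omega}$ on the convex domain $\Omega$ and the bubble bounds above. Summing the two contributions gives $\inf_{v_0\in V_{h,0}}\norm{z_h-v_0}_V\lesssim\delta^{-1}H(H+\Ce^2 2^{-\ell/2})\normL{w_h}{\Omega}$. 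If $z_h$ is taken to be the continuous adjoint solution instead, one first pays the finite‑element error $\norm{z-z_h}_V$, estimated via Theorem \ref{thm_1}, before decomposing.

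The step I expect to be the main obstacle is keeping all constants uniform in $k$ while reconciling the Helmholtz harmonic extensions $\mathcal{L}_i^{-1}$ that build $V_{h,0}$ with the finite‑element functions in play. Two points are delicate: (i) one must verify that $\mathcal{P}_{i,\ell}$, applied subdomainwise to the harmonic components $v_i$ of $v^{\partial}$, reproduces exactly the boundary data of \eqref{eq:pde-approx} — so that Lemma \ref{thm:proj} applies with no residual non‑harmonic term — which rests on $\chi_i$ vanishing on $\partial\Omega_i$ and on the compatibility of the local bubble/harmonic splitting with $\tilde{R}_i^T$; and (ii) the stability estimates $\sum_i\norm{v_i}_{1,k,\Omega_i}^2\lesssim\norm{v^{\partial}}_V^2$ and $\norm{b_i}_{1,k,\Omega_i}\lesssim\Ce H\normL{w_h}{\Omega_i}$ both depend essentially on Assumption \ref{ass:resolution}: it is precisely this scale‑resolution condition that allows the gradient energy to dominate the indefinite $k^2$‑term on each small subdomain, so that the contraction constants do not degrade as $k\to\infty$.
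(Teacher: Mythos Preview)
Your approach is essentially that of the paper, with one structural difference in the proof of \eqref{eq:waveletErrconvDual}. The paper uses a \emph{three}-way local splitting
\[
z_h|_{\Omega_i}=z_h^{i,\roma}+z_h^{i,\romb}+v^i\dashint_{\Omega_i}w_h\,\mathrm{d}x,
\]
where $v^i$ is the special local solution from \eqref{eq:Li-local} (and hence part of $V_{h,0}$), $z_h^{i,\romb}\in V_i^{\partial}$ carries the Dirichlet trace of $z_h$, and the bubble $z_h^{i,\roma}\in V_{h,i}^0$ now solves a local problem with the \emph{mean-zero} source $w_h-\dashint_{\Omega_i}w_h$. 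The coarse approximant is accordingly $v_0=\sum_i\tilde{R}_i^T\big(\mathcal{P}_{i,\ell}z_h|_{\partial\Omega_i}+v^i\dashint_{\Omega_i}w_h\big)\in V_{h,0}$. The point of splitting off the mean is that the paper can then invoke \cite[Lemma 4.1]{fu2021wavelet} for the bubble bound $d_i\|\nabla z_h^{i,\roma}\|_{L^2(\Omega_i)}+\|z_h^{i,\roma}\|_{L^2(\Omega_i)}\lesssim d_i^2\|w_h\|_{L^2(\Omega_i)}$ \emph{without} the factor $\Ce$, yielding exactly the $\delta^{-1}H^2$ term in \eqref{eq:waveletErrconvDual}. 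Your two-way split gives the same order but picks up an extra $\Ce$ on the bubble term, so your final bound reads $\delta^{-1}H(\Ce H+\Ce^2 2^{-\ell/2})\|w_h\|_{L^2(\Omega)}$ rather than $\delta^{-1}H(H+\Ce^2 2^{-\ell/2})\|w_h\|_{L^2(\Omega)}$. This is harmless for the downstream application (only condition \eqref{eq:wavelet-level-H} would acquire a factor $\Ce^{-1/2}$), but it explains why $v^i$ was built into $V_{h,0}$ in the first place. For \eqref{eq:waveletErrconv1} the two arguments coincide: the paper also ends with $(\sum_i\|v_i^{\partial}\|_{H^1(\Omega_i)}^2)^{1/2}$ and relies on the stability of the local components against $\|v^{\partial}\|_V$ that you correctly flag as delicate in point~(ii).
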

	\begin{proof}
 By \eqref{eq:glo-edge-space} and  \eqref{eq:glo-proj}, each $v^{\partial}\in V^{\partial}$ and its coarse projection $\mathcal{P}_0(v^{\partial})$ at level $\ell$ can be expressed 
\begin{align}\label{eq0}
v^{\partial}&:=\sum_{i=1}^N \tilde{\RR}^T_{i}v_{i}^{\partial}\quad \mbox{and}\quad \mathcal{P}_0(v^{\partial})=\sum_{i=1}^N\tilde{{R}}_i^T \left(\mathcal{P}_{i,\ell}v^{\partial}|_{\partial\Omega_i}\right).
\end{align}
Let $e:=v^{\partial}-\mathcal{P}_0 v^{\partial}$. Then 
$e=\sum_{i=1}^N \tilde{\RR}^T_{i}e_{i}$, with $ e_{i}:=v^{\partial}_i-\mathcal{P}_{i,\ell} v^{\partial}|_{\partial\Omega_i}$.
Since $e_i$ satisfies \eqref{eq:pde-approx},  Lemma \ref{thm:proj}, \eqref{eq0} and Theorem \ref{thm:pum-2} lead to 
\begin{align*}
\|e\|_{L^2(\Omega)}&\lesssim \left(\sum_{i=1}^N\|e_i\|_{L^2(\Omega_i)}^2+h^2\|\nabla (\chi_ie_i)\|_{L^2(\Omega_i)}^2\right)^{1/2}
\lesssim\Ce2^{-\ell/2}d_i \left(\sum_{i=1}^N\| v_{i}^{\partial}\|_{H^1( \Omega_i)}^2\right)^{1/2},\\
\|\nabla e\|_{L^2(\Omega)}&\lesssim  \left(\sum_{i=1}^N\|\nabla (\chi_ie_i)\|_{L^2(\Omega_i)}^2\right)^{1/2}
\lesssim\Ce 2^{-\ell/2}\delta^{-1}{d_i}\left(\sum_{i=1}^N 
\| v_{i}^{\partial}\|^2_{H^1( \Omega_i)}\right)^{1/2}.
\end{align*}
By combining these two estimates with Assumption \ref{ass:resolution}, we prove \eqref{eq:waveletErrconv1}. 
Next, we prove \eqref{eq:waveletErrconvDual}. Note that
\begin{align*}
z_h&=\sum_{i=1}^N\widetilde{\mathcal{R}}_i^{T}(z_h|_{\Omega_i})
=\sum_{i=1}^N\widetilde{\mathcal{R}}_i^{T}\left(z_h^{i,\roma}+z_h^{i,\romb}+v^{i}\dashint_{\Omega_i}w_h{\rm d}x\right)
\end{align*}
with $z_h^{i,\roma}:=z_h|_{\Omega_i}-(z_h^{i,\romb}+v^{i}\dashint_{\Omega_i}w_h{\rm d}x)$ and $\dashint_{\Omega_i}v{\rm d}x:=|\Omega_i|^{-1}\int_{\Omega_i}v{\rm{d}}x$ denotes the average of the function $v\in L^1(\Omega_i)$ over each subdomain $\Omega_i$. 
$z_h^{i,\romb}\in V_{h,i}$ is the solution to the following problem 
	\begin{equation*}
		\left\{
		\begin{aligned}
			a_i(z_h^{i,\romb},v_i)&=0,&&\forall v_i\in V_{h,i}^0,\\
			z_h^{i,\romb}&=z_{h}|_{\partial\Omega_i},&&\mbox{on }\partial\Omega_i.
		\end{aligned}
		\right.
	\end{equation*}
Note that $z_h^{i,\roma}\in V_{h,i}^0$ satisfies 
\begin{equation*}	
a_i(z_h^{i,\roma},v_i)=\left(v_i,w_h-\dashint_{\Omega_i}w_h{\rm d}x\right),\mbox{ }\forall v_i\in V_{h,i}^0.
\end{equation*}
Let $v_0:=\sum_{i=1}^N\tilde{{R}}_i^T (\mathcal{P}_{i,\ell}z_h|_{\partial\Omega_i}+v^{i}\dashint_{\omega_i}w_h)\in V_{h,0}$ be an approximation to $z_h$. 
Direct computation gives 
	\begin{align}
		\Vert z_h-v_0\Vert^2_V&=\left\Vert \sum_{i=1}^N\tilde{{R}}_i^T z_h^{i,\roma}+\sum_{i=1}^N\tilde{{R}}_i^T(z_h^{i,\romb}-\mathcal{P}_{i,\ell}z_h^{i,\romb}|_{\partial\Omega_i})\right\Vert_V^2\nonumber\\
&\leq 2\left\Vert\sum_{i=1}^N\tilde{{R}}_i^T z_h^{i,\roma}\right\Vert_{V}^2
+2\left\Vert\sum_{i=1}^N\tilde{{R}}_i^T(z_h^{i,\romb}-\mathcal{P}_{i,\ell}z_h^{i,\romb}|_{\partial\Omega_i})\right\Vert_{V}^2.\label{eq2_4}
\end{align}
Next we bound the two terms. By \cite[Lemma 4.1]{fu2021wavelet}, we have 
 \begin{align}\label{eq:loc-bubble}
d_i\Vert \nabla z_h^{i,\roma}\Vert_{L^2(\Omega_i)}+\Cpoin{\Omega_i}^{-1/2}\Vert z_h^{i,\roma}\Vert_{L^2(\Omega_i)}
\leq \Cpoin{\Omega_i}^{1/2}d_i^2\Vert w_h\Vert_{L^2(\Omega_i)}.
 \end{align}
This, together with Theorem \ref{thm:pum-2} and the finite-overlap assumption, yields
\begin{align}\label{eq:local-estimate} 
\left\Vert\sum_{i=1}^N\tilde{{R}}_i^T z_h^{i,\roma}\right\Vert_{V}\lesssim 
\delta^{-1} d_i^2 \Vert w_h\Vert_{L^2(\Omega)}.
\end{align}
Meanwhile, Lemma \ref{thm:proj} and the definition of $z_h^{i,\romb}$ indicate 
\begin{equation}\label{eq:lastsss}
\begin{aligned}
\normL{z_h^{i,\romb}-\mathcal{P}_{i,\ell}z_h^{i,\romb}|_{\partial\Omega_i}}{\Omega_i}
&\leq \Cw \Ce2^{-\ell/2}d_i\| z_h^{i,\romb}\|_{H^1( \Omega_i)},\\
\normL{\nabla(\chi_i(z_h^{i,\romb}-\mathcal{P}_{i,\ell}z_h^{i,\romb}|_{\partial\Omega_i}))}{\Omega_i}
&\leq \Cw \Ce2^{-\ell/2}\delta^{-1}d_i\| z_h^{i,\romb}\|_{H^1( \Omega_i)}.
\end{aligned}
\end{equation}
Moreover, by \cite[Proof to Theorem 4.3]{fu2021wavelet}, we obtain 
\begin{align}\label{eq:local-estimate2}
\left\Vert z_h^{i,\romb}\right\Vert_{H^1(\Omega_i)}
\leq \left\Vert z_h\right\Vert_{H^1(\Omega_i)}+\left\Vert z_h-z_h^{i,\romb}\right\Vert_{H^1(\Omega_i)}
\lesssim 
\left\Vert  z_h\right\Vert_{H^1(\Omega_i)}+
\Ce \pi^{-1}d_i\Vert w_h\Vert_{L^2(\Omega_i)}.
\end{align}
To derive the {\em a priori} estimate for $z_h$, Theorem \ref{thm_1} and \eqref{eqn:weakformDual} imply
	\begin{align*}
	&	\Vert z_h\Vert_V
  \lesssim k\Const{stab}\sup_{v_h\in V_h}\frac{|{a}^*(z_h,v_h)|}{\Vert v_h\Vert_{V}}
         =k\Const{stab}\sup_{v_h\in V_h}\frac{|a(v_h,z_h)|}{\Vert v_h\Vert_{V}}\nonumber\\
         =&k\Const{stab}\sup_{v_h\in V_h}\frac{|(w_h,z_h)|}{\Vert v_h\Vert_{V}}
	\leq k\Const{stab}\sup_{v_h\in V_h}\frac{\Vert v_h\Vert_{L^2(\Omega)}\Vert w_h\Vert_{L^2(\Omega)}}{\Vert v_h\Vert_V}
		\leq\Const{stab}\Vert w_h\Vert_{L^2(\Omega)}.\nonumber
\end{align*}
This, \eqref{eq:lastsss}, Theorem \ref{thm:pum-2}, \eqref{eq:local-estimate2}, the finite-overlap assumption, Assumption \ref{ass:resolution} and \eqref{eq:c-diameter} yield
\begin{align*}
&\left\Vert\sum_{i=1}^N\tilde{{R}}_i^T
(z_h^{i,\romb}-\mathcal{P}_{i,\ell}z_h^{i,\romb})\right
\Vert_{V}^2
\lesssim\Ce^2 2^{-\ell}\left(k^2H^2+ (\delta^{-1}H)^2\right)\sum_{i=1}^N\| z_h^{i,\romb}\|_{H^1(\Omega_i)}^2\nonumber\\
\lesssim&\Ce^2 2^{-\ell}\left(k^2H^2+(\delta^{-1}H)^2\right)\left(\| z_h\|_{H^1(\Omega)}^2+\Ce^2 d_i^2\Vert w_h\Vert_{L^2(\Omega)}^2\right)
\lesssim\Ce^4 2^{-\ell}(\delta^{-1}H)^2\Vert w_{h}\Vert_{L^2(\Omega)}^2,
\end{align*}
This, \eqref{eq2_4}--\eqref{eq:local-estimate} and \eqref{eq:c-diameter} show the desired result. 
\end{proof}

\begin{proposition}[Estimate for $I-R_0^TQ_0$]\label{prop:wavelet-basedconv}
Let the fine grid mesh size $h$, the coarse grid mesh size $H$ and the diameter $d_i$ of each subdomain $\Omega_i$ satisfy \eqref{eq:fine-mesh-cond}, Assumption \ref{ass:resolution} and \eqref{eq:c-diameter}, respectively. Moreover, assume that the coarse mesh size $H$ and the level parameter $\ell\in \mathbb{N}_{+}$ satisfy
\begin{align}
H&\leq \delta^{1/2}(2k\Const{b}\Const{\ref{thm:last}}\Ce^2)^{-1/2},\label{eq:wavelet-level-H}\\
\ell&\geq 2\log_2\big(\Const{b}C_{\partial}\Const{\rm ap}\Ce \delta^{-1}H(kH)^{-3/2}(kh)^{-1/2}(k\delta)^{-1}\big)+4.\label{eq:wavelet-level-ell}
\end{align}
Let $v_h\in V_h$, and let $v_h^{\partial}$ be defined in \eqref{eq:proj-edge-err}. There holds
\begin{equation}\label{eq:waveletErrconv}
\|(I-R_0^TQ_0)v_h^{\partial}\|_V
\leq \tfrac{1}{2}\|{v_h}\|_{V}.
\end{equation}
\end{proposition}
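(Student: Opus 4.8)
The plan is to apply the classical Schatz duality argument (the Aubin--Nitsche trick adapted to the indefinite Helmholtz setting) to the error $e:=(I-R_0^TQ_0)v_h^\partial$. Recall that $Q_0 = A_0^{-1}R_0 A$, so $R_0^TQ_0$ is the $a(\cdot,\cdot)$-Galerkin projection onto $V_{h,0}$; hence $e$ satisfies the Galerkin orthogonality $a(e, v_0) = 0$ for all $v_0 \in V_{h,0}$. Because $V_{h,0}$ is built from the local Helmholtz-harmonic spaces $V_i^\partial$ via the weighted prolongation, the target $v_h^\partial$ lives (essentially) in $V^\partial$, so Lemma~\ref{thm:last} supplies the energy-norm best approximation estimate \eqref{eq:waveletErrconv1} for $v_h^\partial$ itself.

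First I would use G\aa rding's inequality \eqref{eq:garding}: $\|e\|_V^2 \le \mathrm{Re}\{a(e,e)\} + 2k^2\|e\|_{L^2(\Omega)}^2$. For the first term, Galerkin orthogonality lets me replace $e$ in the second slot by $e - v_0$ for any $v_0\in V_{h,0}$, and then boundedness \eqref{eq:sesqui-bound} gives $\mathrm{Re}\{a(e,e)\} = \mathrm{Re}\{a(e, v_h^\partial - R_0^TQ_0 v_h^\partial)\} \le \Const{b}\|e\|_V \inf_{v_0}\|v_h^\partial - v_0\|_V \le \Const{b}\Const{\rm ap}\Ce\delta^{-1}H2^{-\ell/2}\|e\|_V\|v_h^\partial\|_V$, using \eqref{eq:waveletErrconv1}. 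For the $L^2$-term I would run the duality argument: let $z_h\in V_h$ solve the adjoint problem \eqref{eqn:weakformDual} with right-hand side $w_h = e$, so $\|e\|_{L^2(\Omega)}^2 = a^*(z_h, e) = \overline{a(e, z_h)}$; again by Galerkin orthogonality this equals $\overline{a(e, z_h - v_0)}$ for any $v_0\in V_{h,0}$, and \eqref{eq:sesqui-bound} together with the adjoint approximation estimate \eqref{eq:waveletErrconvDual} gives $\|e\|_{L^2(\Omega)}^2 \le \Const{b}\|e\|_V \cdot \Const{\ref{thm:last}}\delta^{-1}H(H+\Ce^2 2^{-\ell/2})\|e\|_{L^2(\Omega)}$, hence $\|e\|_{L^2(\Omega)} \le \Const{b}\Const{\ref{thm:last}}\delta^{-1}H(H+\Ce^2 2^{-\ell/2})\|e\|_V$.

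Combining, $\|e\|_V^2 \lesssim \big(\Const{b}\Const{\rm ap}\Ce\delta^{-1}H2^{-\ell/2} + k^2 (\Const{b}\Const{\ref{thm:last}}\delta^{-1}H)^2(H+\Ce^2 2^{-\ell/2})^2\big)\|e\|_V\|v_h^\partial\|_V$; dividing by $\|e\|_V$ and then invoking Lemma~\ref{lem:decomp} (which bounds $\|v_h^\partial\|_V \le C_\partial\sigma\|v_h\|_V$ with $\sigma = (kH)^{-3/2}(kh)^{-1/2}(k\delta)^{-1}$) converts the right-hand side into a multiple of $\|v_h\|_V$. The hypotheses \eqref{eq:wavelet-level-H} and \eqref{eq:wavelet-level-ell} are then exactly calibrated so that each of the two contributions to the prefactor, after multiplication by $C_\partial\sigma$, is at most $\tfrac14$: condition \eqref{eq:wavelet-level-H} controls the $k^2(\delta^{-1}H)^2\cdot H^2$-type term coming from the $H$-part of \eqref{eq:waveletErrconvDual} (the genuinely oscillatory/pollution piece, which does \emph{not} improve with $\ell$), while \eqref{eq:wavelet-level-ell} forces $2^{-\ell/2}$ small enough to kill the remaining terms that carry the $\sigma$ blow-up from Lemma~\ref{lem:decomp}. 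Summing the two quarter-bounds yields \eqref{eq:waveletErrconv}.

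The main obstacle I anticipate is bookkeeping rather than conceptual: one must be careful that $v_h^\partial$ genuinely lies in (or is well-approximated within) $V^\partial$ so that Lemma~\ref{thm:last} applies to it --- this is where the definition \eqref{eq:proj-edge-err} of $v_h^\partial$ as $\sum_i \tilde R_i^T(Q_i v_h - v_h|_{\Omega_i})$ and the fact that $Q_i v_h - v_h|_{\Omega_i}$ is $a_i$-harmonic (so lands in $V_i^\partial$, cf. the local splitting used in Lemma~\ref{lem:decomp}) is essential. The second delicate point is tracking the interplay of the three small/large parameters $H$, $\delta$, $2^{-\ell}$ and the constant $\Ce$ through both estimates of Lemma~\ref{thm:last} so that the final two terms are each provably $\le \tfrac14\|v_h\|_V$ under precisely \eqref{eq:wavelet-level-H}--\eqref{eq:wavelet-level-ell}; in particular one should check that the seemingly harmless factor $(H+\Ce^2 2^{-\ell/2})^2$ splits cleanly so that the $H^2$ piece is absorbed by \eqref{eq:wavelet-level-H} independently of $\ell$.
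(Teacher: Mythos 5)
Your skeleton is exactly the paper's: G\aa rding's inequality plus Galerkin orthogonality for $e:=(I-R_0^TQ_0)v_h^{\partial}$, Aubin--Nitsche with the adjoint approximation estimate \eqref{eq:waveletErrconvDual}, the best-approximation bound \eqref{eq:waveletErrconv1} for $v_h^{\partial}\in V^{\partial}$, and finally Lemma \ref{lem:decomp} to trade $\|v_h^{\partial}\|_V$ for $\sigma\|v_h\|_V$; your remark that $Q_iv_h-v_h|_{\Omega_i}\in V_i^{\partial}$ is also the right justification for invoking Lemma \ref{thm:last}. However, the final combination step as you wrote it would fail. The duality step yields $2k^2\normL{e}{\Omega}^2\le 2k^2\big(\Const{b}\Const{\ref{thm:last}}\delta^{-1}H(H+\Ce^2 2^{-\ell/2})\big)^2\|e\|_V^2$, i.e.\ a multiple of $\|e\|_V^2$, \emph{not} of $\|e\|_V\|v_h^{\partial}\|_V$; replacing $\|e\|_V$ by $\|v_h^{\partial}\|_V$ there presupposes a $V$-stability of $R_0^TQ_0$ that is precisely what is being proven. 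Consequently your calibration claim --- ``each contribution, after multiplication by $C_{\partial}\sigma$, is at most $\tfrac14$'' --- is false for this term: condition \eqref{eq:wavelet-level-H} only gives $2k^2(\Const{b}\Const{\ref{thm:last}}\delta^{-1}H^2)^2\le \tfrac12\Ce^{-4}$, an $O(1)$ coefficient, and multiplying it further by $C_{\partial}\sigma$ (which is large; e.g.\ $\sigma\approx k^{9/8}$ when $\delta=h=k^{-3/2}$, cf.\ Remark \ref{rm:l}) cannot be made $\le\tfrac14$ under \eqref{eq:wavelet-level-H}--\eqref{eq:wavelet-level-ell}.

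The repair is the paper's route: the $k^2$-term must be \emph{absorbed}, not transported. From the duality bound and \eqref{eq:wavelet-level-H} one gets $\normL{e}{\Omega}\le\tfrac{1}{2k}\|e\|_V$, hence $2k^2\normL{e}{\Omega}^2\le\tfrac12\|e\|_V^2$ moves to the left-hand side, giving $\|e\|_V\le 2\Const{b}\inf_{v_0\in V_{h,0}}\|v_h^{\partial}-v_0\|_V$. Only this remaining term is then multiplied by $\sigma$ via \eqref{eq:waveletErrconv1} and Lemma \ref{lem:decomp}, $\|e\|_V\le 2\Const{b}C_{\partial}\Const{\rm ap}\Ce\,2^{-\ell/2}\delta^{-1}H\sigma\|v_h\|_V$, and condition \eqref{eq:wavelet-level-ell} --- which carries exactly the $\sigma$-type factors $(kH)^{-3/2}(kh)^{-1/2}(k\delta)^{-1}$ --- makes this $\le\tfrac12\|v_h\|_V$. (A further small point: the $\Ce^2 2^{-\ell/2}$ part of \eqref{eq:waveletErrconvDual} also needs to be kept below the $\tfrac{1}{2k}$ threshold in the absorption step, so the level condition is implicitly used there too, a detail worth stating.) With this reorganization your argument coincides with the paper's proof.
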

\begin{proof}
Let $e_h:=(I-R_0^TQ_0)v_h^{\partial}\in V_0\subset V$.
G\aa rding\rq{}s inequality in Theorem \ref{them:sesquilinear} implies
\begin{align*}
\|e_h\|_V^2&\leq \mathrm{Re} \{a(e_h,e_h)\}
+2k^2\normL{e_h}{\Omega}^2.
\end{align*}
The representation $e_h=({A}^{-1}-R_0^T{A}_0^{-1}R_0){A}v_h^{\partial}$, the inclusion $V_{h,0}\subset V_h$ and Galerkin orthogonality imply
\begin{align}\label{eq:222}
\|e_h\|_V^2
&=\mathrm{Re} \{a(e_h,v_h^{\partial}-v_{0})\}
+2k^2\normL{e_h}{\Omega}^2, \quad\forall v_0\in V_{h,0}.
\end{align}
Next, we estimate $\normL{e_h}{\Omega}$ by the Aubin-Nitsche technique. Let $z_h\in V_h$ be the solution to
\begin{align*}
a^*(z_h,v_h)=(v_h,e_h)_{\Omega}, \quad\forall v_h\in V_h. 
\end{align*}
For all $z_0\in V_0$, we obtain
\begin{align*}
\normL{e_h}{\Omega}^2
&=a^*(z_h,e_h)=\overline{a(e_h,z_h)}
=\overline{a(e_h,z_h-z_0)}.
\end{align*}
Furthermore, the estimates \eqref{eq:sesqui-bound} and \eqref{eq:waveletErrconvDual} lead to
\begin{align*}
\normL{e_h}{\Omega}^2
&\leq \Const{b}\|e_h\|_V\inf\limits_{z_0\in V_0}\|z_h-z_0\|_V\leq  \Const{b}\Const{\ref{thm:last}}\delta^{-1}{H}\big(H+\Ce^2 2^{-\ell/2}\big)\normL{e_h}{\Omega}\|e_h\|_V.
\end{align*}
This implies 
\begin{align*}
\normL{e_h}{\Omega}
\leq  \Const{b}\Const{\ref{thm:last}}\delta^{-1}{H}\big(H+\Ce^2 2^{-\ell/2}\big)\|e_h\|_V.
\end{align*}
By condition \eqref{eq:wavelet-level-H}, we arrive at
$\normL{e_h}{\Omega}\leq \tfrac{1}{2k}\|e_h\|_V$.
This and \eqref{eq:222} lead to
\begin{align*}
\|e_h\|_V^2\leq \mathrm{Re}\{a(e_h,v_h^{\partial}-v_0)\}+\tfrac{1}{2}\|e_h\|_V^2.
\end{align*}
Consequently, we obtain
$\|e_h\|_V^2\leq 2 \mathrm{Re} \{a(e_h,v_h^{\partial}-v_0)\}$.
Finally,  the boundedness of the sesquilinear form $a(\cdot,\cdot)$ in Theorem \ref{them:sesquilinear} implies
\begin{align*}
\|e_h\|_V\leq 2\Const{b} \inf_{v_0\in V_0}\|v_h^{\partial}-v_0\|_V.
\end{align*}
Then the approximation property \eqref{eq:waveletErrconv1} and the estimate \eqref{eq:vhpar} indicate
\begin{align*}
\|e_h\|_V\leq 2\Const{b}C_{\partial} \Const{\rm ap} \Ce 2^{-\ell/2}\delta^{-1} H\sigma\|{u_h}\|_{V}.
\end{align*}
Hence, the desired assertion follows from condition \eqref{eq:wavelet-level-ell}.
\end{proof}
\begin{remark}[Conditions \eqref{eq:wavelet-level-H} and \eqref{eq:wavelet-level-ell}]\label{rm:l}
Given a wavenumber $k$, let $h:=k^{-3/2}$ and the minimum overlapping size $\delta:=h$, then Conditions \eqref{eq:wavelet-level-H} and \eqref{eq:wavelet-level-ell} require
$H\approx k^{-5/4}\text{ and }\ell\approx \log_2(k^{11/8})$.
If one increases the minimum overlapping size to be $\delta\approx H$, then Conditions \eqref{eq:wavelet-level-H} and \eqref{eq:wavelet-level-ell} require
$H\approx k^{-1}\text{ and }\ell\approx \log_2(k^{1/4})$.
Conditions \eqref{eq:wavelet-level-H} and \eqref{eq:wavelet-level-ell} are ensured in both cases by a mild resolution condition on the coarse grid size $H$ and the level parameter $\ell$. Decreasing the minimum overlapping size $\delta$ implies smaller local problems, which, however, requires a finer $H$ and a larger $\ell$.
\end{remark}
Finally, we derive the contractivity of the operator $I-B_{\op{hybrid}}A $.
\begin{theorem}
Under the assumptions of Proposition \ref{prop:wavelet-basedconv}, there holds 
$\|I-B_{\op{hybrid}}A\|\leq \frac{1}{2}$.
\end{theorem}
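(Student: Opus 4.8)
The plan is to combine the two contraction estimates already available: the factorization
\[
I-B_{\op{hybrid}}A = \bigl(I-R_0^TQ_0\bigr)\bigl(I-B_{\op{RAS-imp}}A\bigr)
\]
derived just before \eqref{eq:proj-edge-err}, together with the identity \eqref{eq:proj-edge-err} itself, which says that for any $v_h\in V_h$,
\[
\bigl(I-B_{\op{hybrid}}A\bigr)v_h = \bigl(I-R_0^TQ_0\bigr)v_h^{\partial},
\qquad v_h^{\partial}=\sum_{i=1}^N\tilde{R}^T_i\bigl(Q_iv_h-v_h|_{\Omega_i}\bigr).
\]
The point is that the ``one-level part'' has already been absorbed into the definition of $v_h^{\partial}$, so the whole error operator is realized as the coarse correction $I-R_0^TQ_0$ applied to the edge error $v_h^{\partial}$. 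This is precisely the object estimated in Proposition \ref{prop:wavelet-basedconv}.

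Concretely, I would take an arbitrary $v_h\in V_h$, form $v_h^{\partial}$ as in \eqref{eq:proj-edge-err}, and invoke Proposition \ref{prop:wavelet-basedconv} directly to get
\[
\bigl\|\bigl(I-B_{\op{hybrid}}A\bigr)v_h\bigr\|_V = \bigl\|\bigl(I-R_0^TQ_0\bigr)v_h^{\partial}\bigr\|_V \le \tfrac12\|v_h\|_V.
\]
Taking the supremum over $v_h$ with $\|v_h\|_V=1$ yields $\|I-B_{\op{hybrid}}A\|\le\tfrac12$. Note that Proposition \ref{prop:wavelet-basedconv} already has \eqref{eq:vhpar} (Lemma \ref{lem:decomp}) folded into its statement — the factor $\sigma$ coming from $\|v_h^{\partial}\|_V\le C_\partial\sigma\|v_h\|_V$ is cancelled by the choice of $\ell$ in \eqref{eq:wavelet-level-ell} — so no further work is needed here; this really is a one-line corollary.

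The only mild subtlety, and the one I would flag, is bookkeeping about norms: Proposition \ref{prop:wavelet-basedconv} is stated with $\|v_h\|_V$ on the right-hand side (its proof silently writes $\|u_h\|_V$, but this is the same generic $v_h$), so the operator norm on $V_h$ is understood with respect to $\|\cdot\|_V$, consistent with the bounds throughout Section \ref{sec:converge}. There is no genuine obstacle: all the analytic content — the approximation property \eqref{eq:waveletErrconv1} of the coarse space to $V^\partial$, the Schatz/Aubin–Nitsche duality argument using \eqref{eq:waveletErrconvDual} and G\aa rding's inequality, and the stability bound \eqref{eq:vhpar} for $v_h^\partial$ — has been discharged in Lemmas \ref{lem:decomp}, \ref{thm:proj}, \ref{thm:last} and Proposition \ref{prop:wavelet-basedconv}. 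The theorem is the immediate packaging of those results via the factorization of the error operator.
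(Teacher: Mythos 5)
Your proposal is correct and coincides with the paper's (implicit) argument: the theorem is stated without proof precisely because, as you say, \eqref{eq:proj-edge-err} realizes the error operator as the coarse correction applied to $v_h^{\partial}$, and \eqref{eq:waveletErrconv} of Proposition \ref{prop:wavelet-basedconv} then gives $\|(I-B_{\op{hybrid}}A)v_h\|_V\le\tfrac12\|v_h\|_V$ for every $v_h\in V_h$ in the $\|\cdot\|_V$-operator norm. The only nit is a sign: with $v_h^{\partial}:=\sum_i\tilde{R}_i^T(Q_iv_h-v_h|_{\Omega_i})$ one has $(I-B_{\op{hybrid}}A)v_h=-(I-R_0^TQ_0)v_h^{\partial}$, which is of course immaterial once norms are taken.
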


\section{Numerical tests}\label{sec:num}
In this section we verify key estimates in Section \ref{sec:converge} and report the performance of the proposed EMs-HS preconditioner $B_{\op{hybrid}}$. We employ two settings of $(h,H)$ for a fixed wavenumber $k$. {\em Setting A} corresponds to 
$h \simeq k^{-3/2}$ and $ H=1/k,$ 
where the notation $\simeq$ means that $h$ is chosen as close to $k^{-3/2}$ as possible.
{\em Setting B} takes a fixed number $m$ of grid-points per wavelength,
$h =2\pi/(mk)$ and $H=2/k$,
with $m\in\mathbb{N}_+$. Throughout, we take $m=6\pi$ and hence $h =\frac{1}{3k}$ in {\em Setting B}. The degrees of freedom of the linear system $A$ is $(1/h+1)^d$ and the dimension of coarse space 
$V_{h,0}$ is the product of the number of local multiscale basis functions with the number of coarse element, i.e. $(2^{\ell+d}+1)(1/H)^d$.
	
\begin{table}[hbt!]
		\centering \begin{tabular}{ccccccccc}
			\toprule 
			\multicolumn{5}{c}{$h\simeq k^{-3/2}$, $H=1/k$}\tabularnewline
			\midrule 
			$k$& $h$ & $H$ & $ {(hk)}^{-\frac{1}{2}}$& $\widehat{\sigma}$\tabularnewline\hline
			10&$1/40$&1/10&  2.00&  1.40  \tabularnewline\hline 
			20&$1/100$&1/20& 2.24  &  1.52  \tabularnewline\hline
			40&$1/280$&1/40&2.65   &  1.72  \tabularnewline\hline
			80&$1/720$&1/80& 3.00  &   1.90 \tabularnewline\hline
			160&1/2080 &1/160&  3.61 &  2.19  \tabularnewline\hline
			320&1/5760&1/320&  4.24 &  2.49  \tabularnewline  
	\bottomrule
        \end{tabular} \quad  
        \begin{tabular}{ccccccccc}
        \toprule
			\multicolumn{5}{c}{$h= 1/(3k)$, $H=2/k$}\tabularnewline\midrule
			$k$& $h$ & $H$ &  $ {(hk)}^{-\frac{1}{2}}$& $\widehat{\sigma}$\tabularnewline\hline
			10&1/30& 1/5& 1.73 &1.25   \tabularnewline\hline
			20&1/60& 1/10& 1.73 & 1.25   \tabularnewline\hline
			40&1/120& 1/20&1.73 & 1.25  \tabularnewline\hline
			80&1/240& 1/40& 1.73 & 1.25   \tabularnewline\hline
			160&1/480& 1/80& 1.73 & 1.25  \tabularnewline\hline
			320&1/960&1/160 & 1.73 & 1.25 \tabularnewline	\bottomrule		
		\end{tabular}
		\caption{Comparison of computed value $\widehat{\sigma}$ with predicted value $ {(hk)}^{-\frac{1}{2}}$ for various $h$.}
		\label{ta:test1}
	\end{table}
	
\subsection{Test 1: verification of Lemma \ref{lem:decomp}}
This test is to verify Lemma \ref{lem:decomp}, or \eqref{eq:local-harmonic}. Thus we compare the quantity 
$\widehat{\sigma}:=\max_{i=1,\cdots,N}\frac{\left\|Q_i v_h-v_h|_{\Omega_i}\right\|_{V(\Omega_i)}}{\|v_h\|_{V(\widetilde{\Omega}_{i})}}$ with the predicted value $ {(hk)}^{-\frac{1}{2}}$ from \eqref{eq:local-harmonic}, which are presented in Table \ref{ta:test1}. 
The correlation coefficient between $\widehat{\sigma}$ and ${(hk)}^{-\frac{1}{2}}$ is 0.9999 in {\em Setting A}. Note that the quantity $ {(hk)}^{-\frac{1}{2}}$ takes one unique value in {\em Setting B}, and the computed value $\widehat{\sigma}$ is always independent of $k$. Altogether, these results indicate that the estimate in Lemma \eqref{lem:decomp} is sharp.	

\subsection{Test 2: verification of Lemma \ref{thm:proj}}
Now we verify the estimates in Lemma \ref{thm:proj}. Table \ref{ta:test2a} shows 
$\Delta_1=:\max_{i=1,\cdots,N}\frac{ \| e_i\|_{L^2(\Omega_i)} }{ \|  v_i^{\partial}\|_{H^1(\Omega_i)} }$ and $2^{-\ell/2}H$ proved in Lemma \ref{thm:proj}, and Table \ref{ta:test2b} presents 
$\Delta_2:=\max_{i=1,\cdots,N}\frac{ \|\chi_i \nabla e_i\|_{L^2(\Omega_i)} }{ \|v_i^{\partial}\|_{H^1(\Omega_i)} }$ 
against the proved value $2^{-\ell/2}H/h$. Table \ref{ta:test2a} indicates a first-order convergence of $\Delta_1$ with respect to $H$ in {\em Setting A}. Although the dependence of $\Delta_2$ on $H/h$ in Table \ref{ta:test2b} is not significant in {\em Setting A}, the same observation holds. Moreover, in {\em Setting B}, Table \ref{ta:test2a} and Table \ref{ta:test2b} indicate $\Delta_1=\mathcal{O}(H)$ and $\Delta_2=\mathcal{O}(H/h)$ for a fixed level parameter $\ell$. 

\begin{table}[hbt!]
\centering
\begin{tabular}{ccccccccc}
\toprule 
\multicolumn{9}{c}{$h\simeq k^{-3/2}, H=1/k$}\tabularnewline
\midrule 
				\multirow{2}{*}{$k$}&  \multirow{2}{*}{$h$}&  \multirow{2}{*}{$H$}& \multicolumn{2}{c}{$\ell=0$} & \multicolumn{2}{c}{$\ell=1$} & \multicolumn{2}{c}{$\ell=2$}\tabularnewline
				\cmidrule{4-9} \cmidrule{5-9} \cmidrule{6-9} \cmidrule{7-9} \cmidrule{8-9} \cmidrule{9-9}
				&&	&$\Delta_1$& $2^{-\ell/2}H$ & $\Delta_1$& $2^{-\ell/2}H$ &$\Delta_1$& $2^{-\ell/2}H$\tabularnewline
				\midrule 
				10&1/40& 1/10&  0.014488  &0.1000   & 0.012014 & 0.0707 &0.010737 & 0.0500\tabularnewline	\midrule 
				20&1/100&1/20 &  0.005940 &0.0500   &0.005096 & 0.0354 &0.004565 & 0.0250\tabularnewline\midrule
				40&1/280&1/40 &  0.002194  & 0.0250   & 0.001965 & 0.0177 &0.001720 & 0.0125\tabularnewline\midrule
				80&1/720& 1/80&  0.000871 & 0.0125  & 0.000801 & 0.0088 &0.000704 & 0.0063\tabularnewline\midrule
				160&1/2080&1/160 & 0.000309 & 0.0063   & 0.000293 & 0.0044 &0.000263 & 0.0031\tabularnewline\midrule
				320&1/5760&1/320 & 0.000114 &  0.0031   &  0.000110 & 0.0022& 0.000098 & 0.0016\tabularnewline
				\bottomrule
				
				\multicolumn{9}{c}{$h= 1/(3k)$, $H=2/k$}\tabularnewline
				\midrule 
				\multirow{2}{*}{$k$}&  \multirow{2}{*}{$h$}&  \multirow{2}{*}{$H$}& \multicolumn{2}{c}{$\ell=0$} & \multicolumn{2}{c}{$\ell=1$} & \multicolumn{2}{c}{$\ell=2$}\tabularnewline
				\cmidrule{4-9} \cmidrule{5-9} \cmidrule{6-9} \cmidrule{7-9} \cmidrule{8-9} \cmidrule{9-9}
				&&	&$\Delta_1$& $2^{-\ell/2}H$ & $\Delta_1$& $2^{-\ell/2}H$ &$\Delta_1$& $2^{-\ell/2}H$\tabularnewline
				\midrule 
				10&1/30& 1/5 & 0.0219196 & 0.200000   & 0.0181578 & 0.141421  & 0.0147435 & 0.100000 \tabularnewline	\midrule 
				20&1/60& 1/10&  0.0109598 & 0.100000 & 0.0090789 & 0.070711  & 0.0073717 & 0.050000\tabularnewline\midrule
				40&1/120& 1/20& 0.0054799 & 0.050000   & 0.0045394 & 0.035355  & 0.0036859 & 0.025000\tabularnewline\midrule
				80&1/240& 1/40& 0.0027399 & 0.025000  & 0.0022697 & 0.017678  &0.0018429 & 0.012500\tabularnewline\midrule
				160&1/480&1/80 & 0.0013700 & 0.012500   & 0.0011349 & 0.008839  & 0.0009215 & 0.006250\tabularnewline\midrule
				320&1/960& 1/160& 0.0006850 & 0.006250   & 0.0005674 & 0.004419  & 0.0004607 & 0.003125\tabularnewline
				\bottomrule
				
			\end{tabular}
		\caption{Comparion of computed value $\Delta_1$ with $2^{-\ell/2}H$. }
			\label{ta:test2a}
	\end{table}
	
\begin{table}[hbt!]
\centering
\begin{tabular}{ccccccccc}
				\toprule 
				\multicolumn{9}{c}{$h\simeq k^{-3/2}, H=1/k$}\tabularnewline
				\midrule 
				\multirow{2}{*}{$k$}&  \multirow{2}{*}{$h$}&  \multirow{2}{*}{$H$}& \multicolumn{2}{c}{$\ell=0$} & \multicolumn{2}{c}{$\ell=1$} & \multicolumn{2}{c}{$\ell=2$}\tabularnewline
				\cmidrule{4-9} \cmidrule{5-9} \cmidrule{6-9} \cmidrule{7-9} \cmidrule{8-9} \cmidrule{9-9}
				&&	&$\Delta_2$&$2^{-\ell/2}H/h$ & $\Delta_2$&$2^{-\ell/2}H/h$&$\Delta_2$&$2^{-\ell/2}H/h$\tabularnewline
				\midrule 
				10&1/40& 1/10 &  0.359514 & 4.0000   & 0.319254 & 2.8284 &0.311940 & 2.0000\tabularnewline	\midrule 
				20&1/100& 1/20&  0.362469 & 5.0000   &0.346776 & 3.5355 &0.334599 & 2.5000\tabularnewline\midrule
				40&1/280& 1/40&  0.367140 & 7.0000  &0.357546 & 4.9497 &0.346937 & 3.5000\tabularnewline\midrule
				80&1/720& 1/80&  0.368978 & 9.0000  &0.360842 & 6.3640 & 0.351649 & 4.5000\tabularnewline\midrule
				160&1/2080&1/160 & 0.370396 & 13.0000   & 0.365802 & 9.1924 &0.357377 & 6.5000\tabularnewline\midrule
				320&1/5760& 1/320& 0.370988& 18.0000   &  0.368373 & 12.7279 &0.361781 & 9.0000\tabularnewline
				\toprule 				
				\multicolumn{9}{c}{$h= 1/(3k)$, $H=2/k$}\tabularnewline
				\midrule 
				\multirow{2}{*}{$k$}&  \multirow{2}{*}{$h$}&  \multirow{2}{*}{$H$}& \multicolumn{2}{c}{$\ell=0$} & \multicolumn{2}{c}{$\ell=1$} & \multicolumn{2}{c}{$\ell=2$}\tabularnewline
				\cmidrule{4-9} \cmidrule{5-9} \cmidrule{6-9} \cmidrule{7-9} \cmidrule{8-9} \cmidrule{9-9}
				&&	&$\Delta_2$&$2^{-\ell/2}H/h$ & $\Delta_2$&$2^{-\ell/2}H/h$&$\Delta_2$&$2^{-\ell/2}H/h$\tabularnewline
				\midrule
				10&1/30& 1/5 & 0.37018 & 6.0000   & 0.35174 & 4.2426  & 0.34388 & 3.0000 \tabularnewline	\midrule 
				20&1/60& 1/10& 0.37018 & 6.0000 & 0.35174 & 4.2426  & 0.34388 & 3.0000\tabularnewline\midrule
				40&1/120& 1/20&0.37018 & 6.0000  & 0.35174 & 4.2426  & 0.34388 & 3.0000\tabularnewline\midrule
				80&1/240& 1/40&0.37018 & 6.0000 & 0.35174 & 4.2426  & 0.34388 & 3.0000\tabularnewline\midrule
				160&1/480&1/80 &0.37018 & 6.0000  & 0.35174 & 4.2426  & 0.34388 & 3.0000\tabularnewline\midrule
				320&1/960& 1/160&0.37018 & 6.0000 &0.35174 & 4.2426  & 0.34388 & 3.0000\tabularnewline
				\bottomrule				
			\end{tabular}
			
		\caption{Comparison of computed value $\Delta_2$ with $2^{-\ell/2}H/h$. \label{ta:test2b}}
	\end{table}

\subsection{Test 3: EMs-HS for homogeneous cases} 
Next, we show the performance of EMs-HS in terms of the iteration numbers with biconjugate gradient stabilized (BiCGSTAB) method, implemented in MATLAB. In this test, we take the zero vector as the initial guess and a reduction of the relative residual by $10^{-8}$ as the stopping criterion. We present in Tables \ref{ta:test4a} and \ref{ta:test4b} the iteration numbers of EMs-HS for different levels and wavenumbers in 2-d and 3-d examples, respectively. The iteration number depends only weakly on $k$ in {\em Setting A}, and EMs-HS is robust with respect to $k$ in 2-d case when $\ell>0$ in {\em Setting B}.

\begin{table}[htb!]
		\centering \begin{tabular}{ccccccccc}
			\toprule 
			\multicolumn{6}{c}{$h\simeq k^{-3/2}$, $H=1/k$}\tabularnewline
			\midrule 
			
			$k$& $h$ & $H$ &   {$\ell=0$}&   {$\ell=1$}&  {$\ell=2$}\tabularnewline\hline
			10 &1/40&1/10& 10&7&3   \tabularnewline\hline
			20&$1/100$&1/20& 12&9&5    \tabularnewline\hline
			40&$1/280$&1/40&16 &10&7  \tabularnewline\hline
			80&$1/720$&1/80&  22&11&7    \tabularnewline\hline
			160&1/2080 &1/160& 32 &15&8   \tabularnewline\hline
			320&1/5760&1/320& 54 &18&10   \tabularnewline
            \bottomrule 
           \end{tabular}
           \begin{tabular}{ccccccccc}
			\toprule 
			\multicolumn{6}{c}{$h= 1/(3k)$, $H=2/k$}\tabularnewline
			\midrule 
			
			$k$& $h$ & $H$ &  {$\ell=0$}&   {$\ell=1$}&  {$\ell=2$}\tabularnewline\hline
			10&1/30& 1/5& 16&8&7   \tabularnewline\hline
			20&1/60& 1/10& 17&8&7    \tabularnewline\hline
			40&1/120& 1/20& 21&8&7  \tabularnewline\hline
			80&1/240& 1/40&33 &8&7    \tabularnewline\hline
			160&1/480&1/80&69 &8&7   \tabularnewline\hline
			320&1/960& 1/160&206 &8&7  \tabularnewline\bottomrule		
		\end{tabular}
		\caption{Iteration numbers of EMs-HS under different level parameters $\ell$ and wavenumbers $k$, 2-d tests.}
		\label{ta:test4a}
	\end{table}
	
	\begin{table}[hbt!]
		\centering \begin{tabular}{cccc}
			\toprule 
			\multicolumn{4}{c}{$h\simeq k^{-3/2}$, $H=1/k$}\tabularnewline
			\midrule 			
			$k$& $h$ & $H$ &  $\ell=0$\tabularnewline\hline
			10&$1/40$&1/10&  11    \tabularnewline\hline
			20&$1/100$&1/20& 13   \tabularnewline
            \bottomrule
            \end{tabular} 
 \quad
   \begin{tabular}{cccc}
   \toprule
			\multicolumn{4}{c}{$h= 1/(3k)$, $H=2/k$}\tabularnewline
			\midrule 	
			$k$& $h$ & $H$ &   $\ell=0$\tabularnewline\hline
			10&$1/30$&1/5& 16     \tabularnewline\hline
			20&$1/60$&1/10& 18  \tabularnewline\hline
			40&$1/120$&1/20& 24 \tabularnewline\bottomrule	
		\end{tabular}
		\caption{Iteration numbers of the EMs-HS under level parameter $\ell:=0$ and different wavenumbers $k$, 3-d tests. }
		\label{ta:test4b}
	\end{table}

 \subsection{Test 4: EMs-HS for heterogeneous cases}
Finally, we test the performance of EMs-HS using three strongly heterogeneous models, which are much more challenging. The first model has a homogeneous background but there are perforations (white region in Figure \ref{fig:m1})  inside the computational domain. The second model (Figure \ref{fig:m2}) and third model (Figure \ref{fig:m3}) are two heterogeneous models whose background velocity is 1 m/s and inside which fractures and inclusions with a velocity 1.4 m/s lie.
We consider three preconditioners: EMs-HS preconditioner, the polynomial coarse space based hybrid preconditioner and the one-level preconditioner $B_{\op{RAS-imp}}$. The iteration numbers and 
the CPU time of the BiCGSTAB solve stage with these three preconditioners are shown in Table \ref{ta:test4}, where we fix the fine-scale and coarse space size and vary the wavenumber. 

The EMs-HS preconditioner outperforms polynomial based hybrid preconditioner especially for the two 2-d models. Moreover, the polynomial based hybrid preconditioner fails to generate a solution in most 2-d tests. Even when it converges, it requires much more iterations than the one-level preconditioner $B_{\op{RAS-imp}}$. This is attributed to the fact that the polynomials fail to resolve the heterogeneous velocity and source frequency. In contrast, the EMs-HS preconditioner converges in all cases since $V_{h,0}$ transfers global information on the heterogeneous velocity and wavenumber to the local solutions, and can boost the performance of the one-level preconditioner $B_{\op{RAS-imp}}$. Hence it is overall more efficient than the one-level preconditioner $B_{\op{RAS-imp}}$. The real part of the solutions for these three models are displayed in Figures \ref{fig:m1}-\ref{fig:m3}, which illustrate the 
complicated wavefields due to heterogeneity in the velocity fields.

   \begin{table}[hbt!]
	\centering
	\begin{adjustbox}{max width=\textwidth}
		\begin{tabular}{ccccccccccc}\toprule
			\multicolumn{7}{c}{Homogeneous model with perforation, $h=1/1000$, $H=1/50$}\tabularnewline\hline
			\multirow{2}{*}{ $k$} & \multicolumn{2}{c}{EMs-HS: $\ell=1$} &\multicolumn{2}{c}{$Q_2$ based $B_{\op{hybrid}}$} &  \multicolumn{2}{c}{$B_{\op{RAS-imp}}$} \tabularnewline
			\cline{2-7} 
			&Iter  & Tsol  &    Iter  & Tsol& Iter  & Tsol \tabularnewline\hline
			157.1	&13&3.0&/ &/ &209&29.9 \tabularnewline\hline 
			179.5	&12&3.0&/ & /&192&27.3 \tabularnewline\hline 
			209.4&17&4.2&/ & /&223&31.6 \tabularnewline
			\midrule
			\multicolumn{7}{c}{2-d heterogeneous model, $h=1/1000$, $H=1/50$}\tabularnewline\hline
			\multirow{2}{*}{ $k$} & \multicolumn{2}{c}{EMs-HS: $\ell=1$} &\multicolumn{2}{c}{$Q_2$ based $B_{\op{hybrid}}$} &  \multicolumn{2}{c}{$B_{\op{RAS-imp}}$} \tabularnewline
			\cline{2-7} 
			&Iter  & Tsol  &    Iter  & Tsol& Iter  & Tsol \tabularnewline\hline
			$128.2\sim 179.5$&13&3.5& 827& 173.9&279&43.9 \tabularnewline\hline 
			$149.6\sim 209.4$	&20&5.7& 847&178.2 &251&38.7\tabularnewline\hline 
			$179.5\sim 251.3$	&51&12.2& /&/&273&43.6\tabularnewline
			\midrule
			\multicolumn{7}{c}{3-d heterogeneous model, $h=1/200$, $H=1/20$}\tabularnewline\hline
			\multirow{2}{*}{ $k$} & \multicolumn{2}{c}{EMs-HS: $\ell=0$} &\multicolumn{2}{c}{$Q_1$ based $B_{\op{hybrid}}$} &  \multicolumn{2}{c}{$B_{\op{RAS-imp}}$} \tabularnewline
			\cline{2-7} 
			&Iter  & Tsol  &    Iter  & Tsol& Iter  & Tsol \tabularnewline\hline
			$29.9\sim 41.9$		&12&551.3 &18&639.5 &113 &3529.7 \tabularnewline\hline 
			$35.9\sim 50.3 $&21&964.3 &37&1279.4  &106&3494.3 \tabularnewline\bottomrule 
			
		\end{tabular}
	\end{adjustbox}
	\caption{ Iteration numbers ("Iter") and CPU time ("Tsol") in seconds of the BiCGSTAB solve stage with EMs-HS preconditioner, the polynomial coarse space based hybrid Schwarz preconditioner and the one-level preconditioner $B_{\op{RAS-imp}}$. "/" indicates that the preconditioner exits the BiCGSTAB iteration before reaching the convergence criteria.}
 \label{ta:test4}
\end{table}

\begin{figure}[hbt!]
	\centering
	\includegraphics[trim={0cm .5cm 0cm .5cm},clip,width=3in]{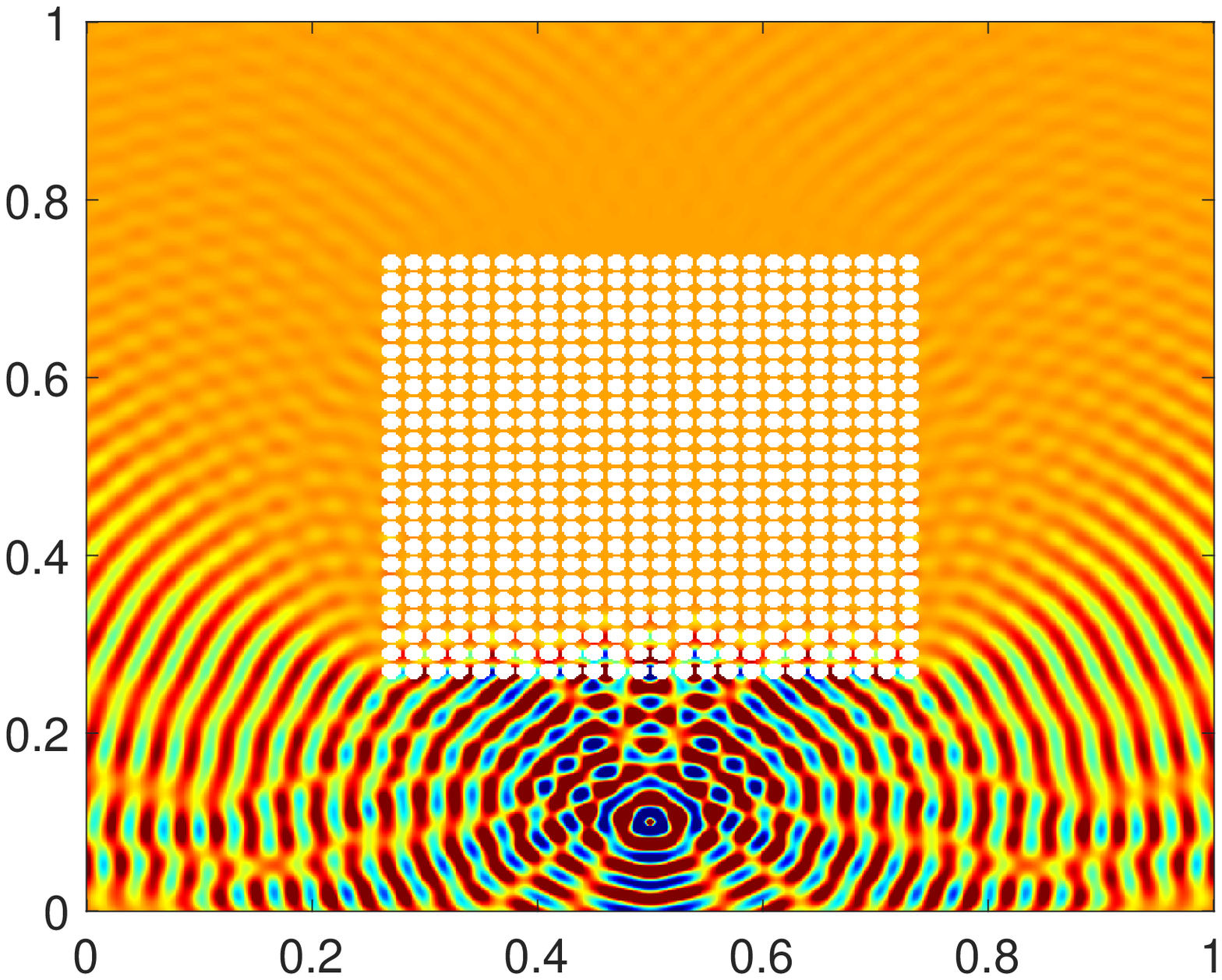}
	\caption{The solution of the homogeneous model with perforation, $k=209.4$.}
\label{fig:m1}
\end{figure}

\begin{figure}[hbt!]
	\centering
	\subfigure[A 2-d heterogeneous model, velocity in blue region is 1 m/s and red region is 1.4 m/s. ]{
		\includegraphics[trim={0cm .5cm 0.00cm 0cm},clip,width=3.0in]{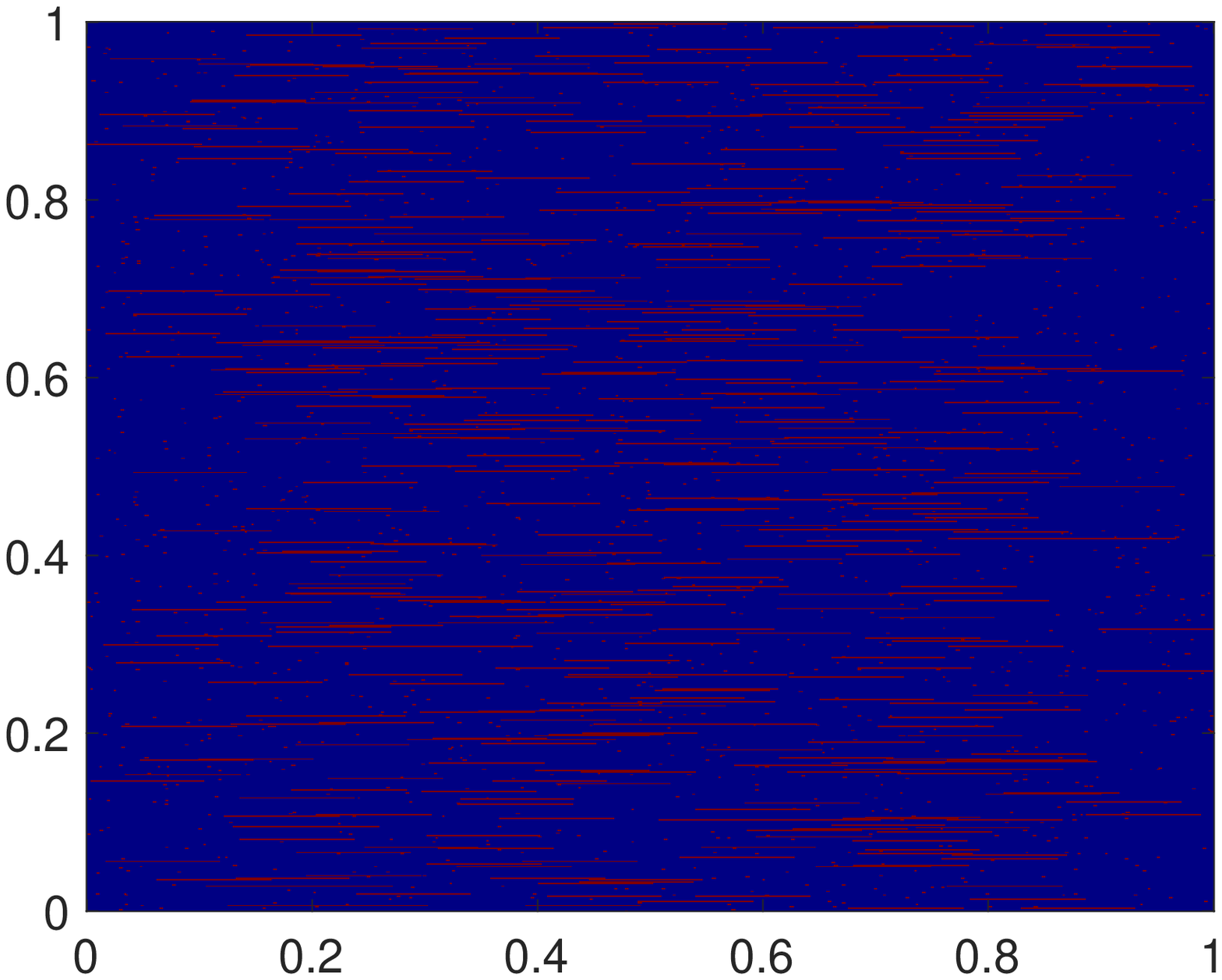}}
	\subfigure[Solution of the 2-d heterogeneous model, $k=179.5\sim 251.3$]{
		\includegraphics[trim={0cm .5cm 0.00cm 0cm},clip,width=3.0in]{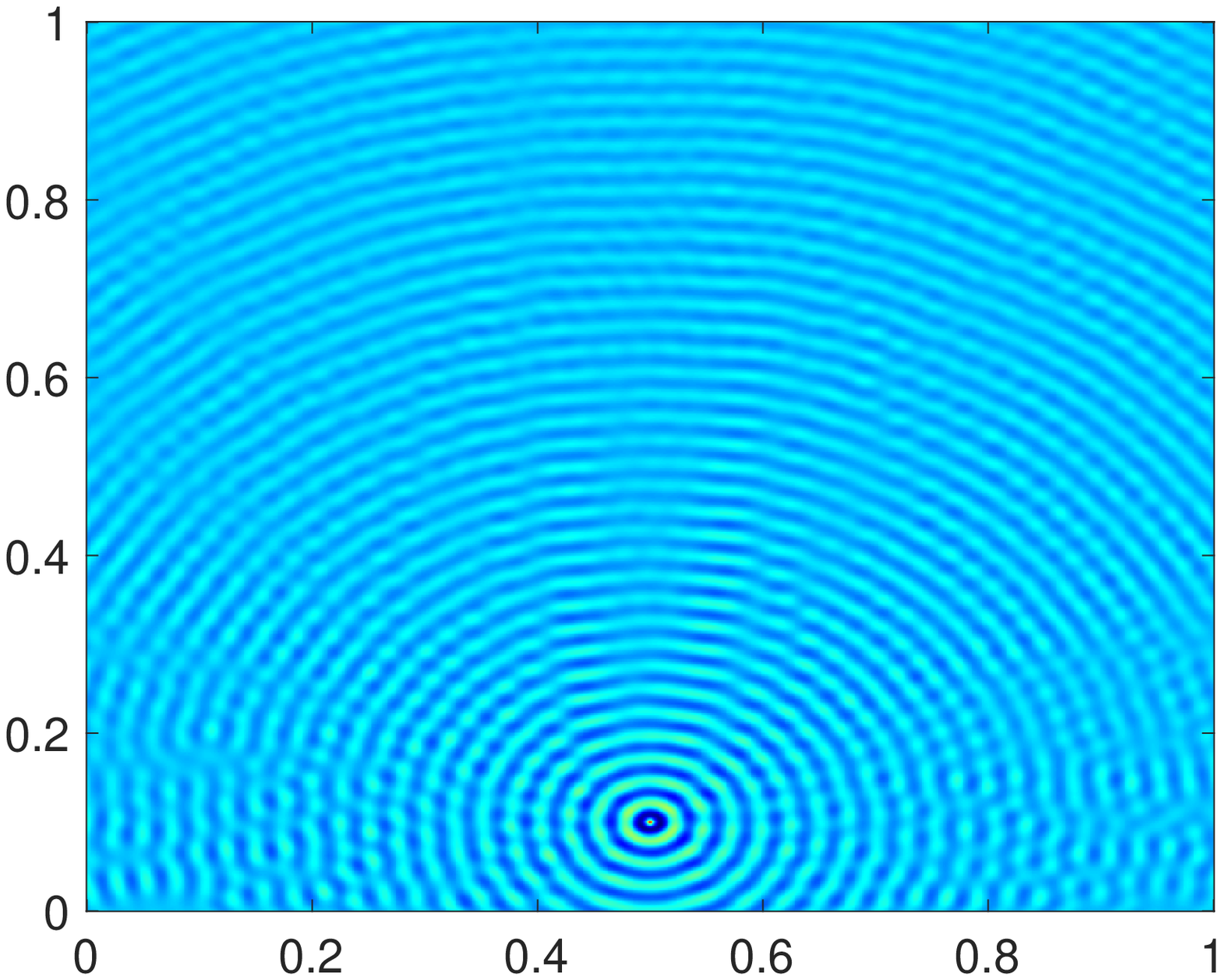}}
				
    \caption{The velocity field and solution of the 2-d heterogeneous model.}\label{fig:m2}
\end{figure}

	\begin{figure}[hbt!]
	\centering
	\subfigure[A 3-d heterogeneous model, velocity in blue region is 1 m/s and red region is 1.4 m/s.]{
		\includegraphics[trim={0cm .5cm 0.00cm 0cm},clip,width=2.9in]{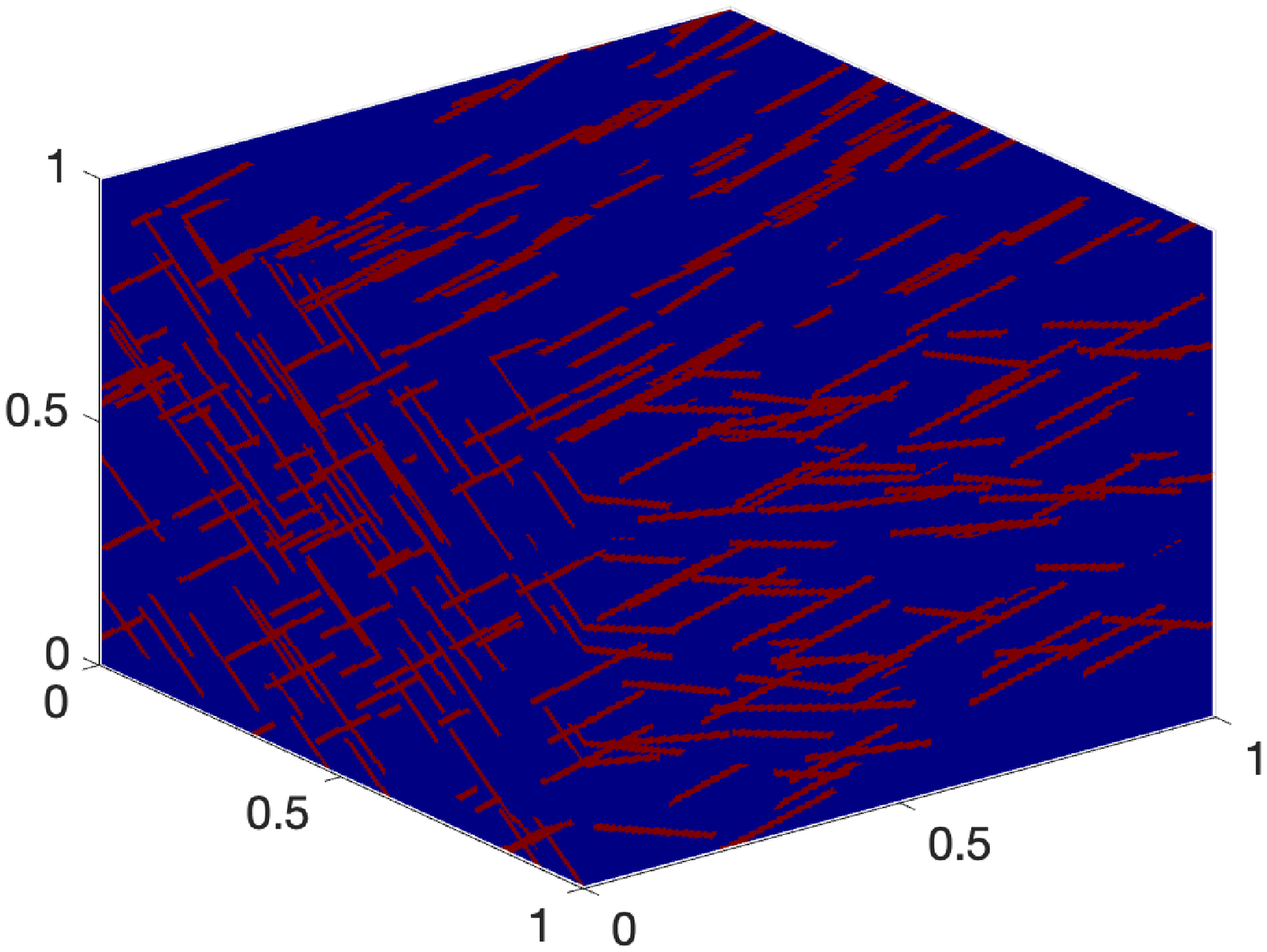}}	
	\subfigure[Solution of the 3-d heterogeneous model, $k=35.9\sim 50.3 $]{
		\includegraphics[trim={0cm .5cm 0.00cm 0cm},clip,width=3.0in]{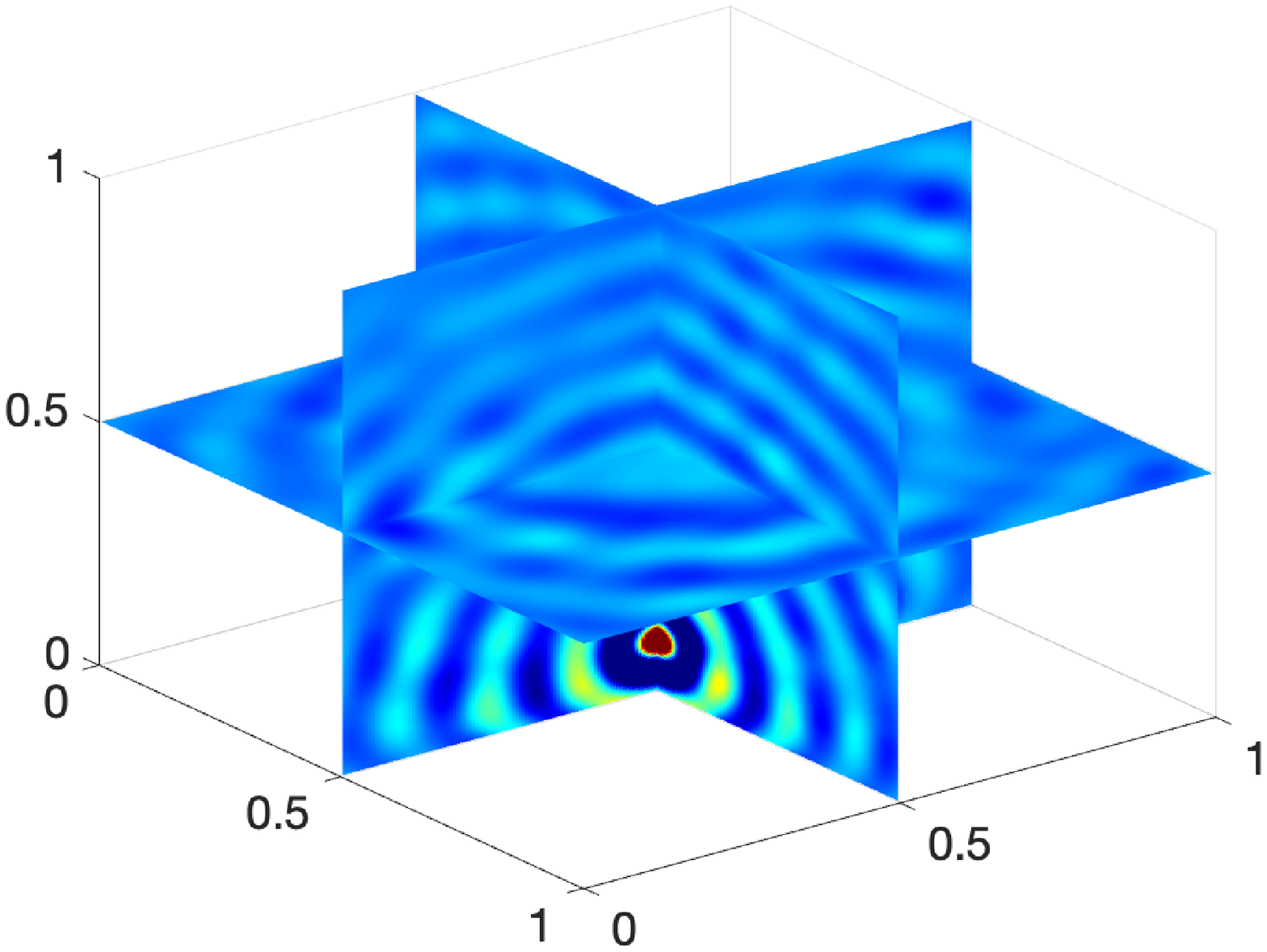}}				
		
      \caption{The velocity field and solution of the 3-d heterogeneous model.}
      \label{fig:m3}
\end{figure}
  
 \section{Conclusion}\label{sec:conclusion}
 We have designed a novel hybrid Schwarz preconditioner for the Helmholtz problem with large wavenumbers based on the edge multiscale ansatz space, which allows using overlaps of size of fine grid. We have rigorously analyzed its  convergence, and demonstrated the performance of the method with extensive numerical tests in both 2-d and 3-d. There are several interesting questions deserving further investigations. Is it possible to design scalable solvers for the coarse problems, of which the dimension is increasing with respective to the wavenumber $k$?  Whether we can design a similar hybrid Schwarz preconditioner for the time-harmonic Maxwell equation with large wavenumber?
\bibliographystyle{abbrv}
\bibliography{reference}
\end{document}